\pgfplotsset{compat=1.18}
\newtheorem{theorem}{Theorem}[section]
\newtheorem{lemma}[theorem]{Lemma}
\newtheorem{fact}[theorem]{Fact}
\newtheorem{definition}{Definition}
\newcommand{\vol}{\mathrm{vol}}
\newcommand{\bulkset}{C_2}
\newcommand{\bndryset}{C_1}
\newcommand{\bulksetbody}{\mathcal{C}_2}
\newcommand{\bndrysetbody}{\mathcal{C}_1}
\newcommand{\facet}{\beta}
\newcommand{\curlyext}{\mathcal{E}_u(\mathcal{P}_{\facet})}
\DeclareMathOperator*{\argmax}{arg\,max}
\DeclareMathOperator*{\argmin}{arg\,min}
\definecolor{darkgreen}{rgb}{0,0.6,0}
\definecolor{purple}{rgb}{0.5,0,0.5}
\title{
A mixing time bound for Gibbs sampling from log-smooth log-concave distributions}
\author{Neha S. Wadia\thanks{neha.wadia@berkeley.edu}}
\affil{\small Center for Computational Mathematics, Flatiron Institute, New York NY 10010, USA}
\begin{document}

\maketitle

\begin{abstract}
The Gibbs sampler, also known as the coordinate hit-and-run algorithm, 
is a Markov chain that is widely used to draw samples
from probability distributions in arbitrary dimensions.
At each iteration of the algorithm, a randomly selected coordinate is resampled from the
distribution that results from conditioning on all the other coordinates.
We study the behavior of the Gibbs sampler on the class of log-smooth 
and strongly log-concave target distributions 
supported on $\mathbb{R}^n$.
Assuming the initial distribution is $M$-warm with respect to the target, we show that the Gibbs sampler
requires at most
$O^{\star}\left(\kappa^2 n^{7.5}\left(\max\left\{1,\sqrt{\frac{1}{n}\log \frac{2M}{\gamma}}\right\}\right)^2\right)$
steps to produce a sample with error no more than $\gamma$ in total variation distance
from a distribution with condition number $\kappa$.
\end{abstract}

\section{Introduction}

Sampling from probability distributions in high dimensional spaces is a 
fundamental computational primitive;
it forms the basis of efficient numerical methods for approximating arbitrary integrals.
The problem statement is the following: given a density function $\pi$, compute a point $x$ with density proportional to $\pi(x)$.

A general approach to solving this problem is to design a reversible, ergodic Markov 
chain with a unique stationary distribution that is
equal to the target distribution from which samples are needed. 
It is often possible to design relatively simple chains with low per-iteration computational
complexity that are fit for purpose by
implementing the Metropolis-Hastings filter \cite{metropolis1953-eq-of-state,hastings1970}, a rule by which to either accept
the next step in the dynamics or remain put and so tailor the dynamics
toward a specific stationary distribution.
The resulting \emph{Metropolized} or \emph{Markov Chain Monte Carlo}
algorithms are known to 
converge asymptotically to their stationary distributions under mild 
regularity conditions.
Non-asymptotic rates of convergence or \emph{mixing times}
are comparatively few in number and are both algorithm- and target-specific. They are important because downstream estimators computed using samples drawn from a dynamics that has not converged will suffer from bias.

The class of log-concave target distributions is of particular interest.
These are distributions with density functions of the form $e^{-f(x)}, \, x\in\mathbb{R}^n,$ 
where $f$ is convex in $x$.
The mixing times of a number of Markov chains are polynomial in $n$ for this 
class of target distributions.
Such chains
are considered to be \emph{rapidly mixing}.
Examples are the ball walk \cite{KLS1997-random-walks-volume-algorithm,vempala-lovasz-2007}, the grid walk \cite{dyer-frieze-kannan1991,lovasz-vempala-simulated-annealing}, and 
the hit-and-run walk \cite{boneh1983-hit-and-run,smith1984-hit-and-run,lovasz-vempala2004-hit-and-run-corner}. An expository survey of the associated results may be found in \cite{vempala-review}.
In this paper, we upper bound the mixing time of the
Gibbs sampler, also known as coordinate hit-and-run (CHAR), for sampling from the class of log-smooth and strongly log-concave distributions supported on $\mathbb{R}^n$. That is, in addition to convexity we require $f$ to be such that the spectrum of its Hessian is uniformly bounded both above and below.
We show that the Gibbs sampler also mixes rapidly on this class of distributions.

At each iteration of the dynamics of the Gibbs sampler, a single coordinate 
$x_i$ of $x$ is 
chosen uniformly at random and replaced with a sample from its conditional 
distribution, i.e., the one-dimensional distribution over $x_i$ that 
results from conditioning on all the other $n-1$ coordinates.
This dynamics can be interpreted as a limiting case of a Metropolized algorithm in which every move is accepted.
An advantage of the Gibbs sampler is that it reduces the $n$-dimensional 
sampling problem to a one-dimensional problem at each iteration.
Another advantage is that it
has no hyperparameters to tune; in particular, there is no 
notion of a step size in the dynamics.

The Gibbs sampler has been in practice long enough to have been reintroduced
to the literature several times and is difficult to trace accurately. 
It was introduced in the theoretical computer science
literature by Turchin in 1971 \cite{turchin1971}, 
in the image processing literature by Geman and Geman in 1984 \cite{geman-geman},
and popularized in the statistics literature by Gelfand and Smith,
who demonstrated that it could be implemented efficiently on a wide variety
of problems of interest in statistics at the time, in 1990 \cite{Gelfand1990SamplingBasedAT}. 
Long before this, it was being used in statistical physics to sample
from models of magnetic materials.

An important proof technique for establishing mixing time bounds relies 
on the following two ingredients:
a result that gives the dependence of the mixing time on a quantity called
the \emph{conductance}, 
which measures how well or poorly the dynamics explores 
the state space, and an isoperimetric inequality 
that enables a lower bound on the conductance. Isoperimetric inequalities that are 
useful in this context are dynamics-dependent. 
The fact that the Gibbs sampler is constrained to move only along the $n$ 
coordinate directions prevents the use of a general technique (the localization lemma \cite{kls-isoperimetric-problems-and-localization-lemma}) for proving 
the relevant isoperimetric inequality.
Until recently, this was a barrier to establishing mixing times for target distributions supported on continuous state spaces.
(The case in discrete state spaces, where
the Gibbs sampler is often termed ``Glauber dynamics'', is different,
\cite{Martinelli1999-lecture-notes-glauber-dynamics}.)

Recently, Laddha and Vempala \cite{laddha-vempala}
and Narayanan and Srivastava \cite{narayanan2022-CHAR} concurrently 
gave bounds on the mixing time of the Gibbs sampler for sampling
from a uniform distribution supported on a convex body $K$ in $\mathbb{R}^n$
from a warm start.
Laddha and Vempala prove an isoperimetric theorem using a first-principles approach.
They obtain a bound of $O^{\star}\left(R^2n^8\right)$ on the mixing time where $R^2$ is the expected squared
distance of a random point from the centroid of $K$. 
The notation $O^{\star}$ suppresses dependence on logarithmic factors and other problem parameters.
Narayanan and Srivastava instead establish a mixing time bound for an 
auxiliary dynamics that they show dominates the mixing time of the Gibbs sampler. 
They give a bound of $O^{\star}\left(R_1^4\,n^7\right)$, where $R_1$ is 
the smallest number such that $K$ is sandwiched
between an $L_{\infty}$ ball and an $L_{\infty}$ ball scaled by $R_1$.
Narayanan, Rajaraman, and Srivastava \cite{narayananCHAR-cold-start} have since also 
demonstrated that the Gibbs sampler mixes in polynomial time for the same class of target distributions from a cold start.

We give an upper bound of $O^{\star}\left(\kappa^2 \,n^{7.5}\right)$
on the mixing time for Gibbs sampling from log-smooth and strongly log-concave distributions supported on $\mathbb{R}^n$ from a warm start.
$\kappa$ is the ratio of the upper and lower bounds on the spectrum of the Hessian of $f$.
The following theorem is our main contribution:

\begin{theorem}
    \label{thm:main-mixing-time}
    Consider a probability distribution supported on $\mathbb{R}^n$ with density 
    function proportional to $e^{-f(x)}$ where $f$ is $\mu$-strongly convex in $x$ and has $L$-Lipschitz gradients.
    Let $\kappa=L/\mu$.
    Let $\pi^{k}$ be the distribution of the $k^{th}$
    iterate produced by the Gibbs sampler.
    For 
    some universal constant $C$ and any $\gamma\in(0,1/2)$, 
    if the initial distribution $\pi^{0}$ is $M$-warm with respect to $\pi$, then the total variation distance between
    $\pi^{\tau}$ and $\pi$ is no more than $\gamma$ when $\tau$ is no more than
    \begin{equation}
        C
        \kappa^2 
        n^{7.5}
        \log^2 n 
        \left(
        \max\left\{1,\sqrt{\frac{1}{n}\log\frac{2M}{\gamma}}
        \right\}
        \right)^2
        \log\frac{2M}{\gamma}.
        \label{eq:mixing-time-with-lee-vempala-isoperimetry}
    \end{equation}
\end{theorem}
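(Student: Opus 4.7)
The plan is to adapt the conductance-based framework used by Laddha--Vempala and Narayanan--Srivastava for the uniform case to the log-smooth, strongly log-concave setting on all of $\mathbb{R}^n$. The three ingredients I would assemble are (i) a truncation of $\pi$ to a high-probability ball so that the walk lives effectively on a bounded convex set, (ii) a one-step coupling/overlap bound for the Gibbs kernel, and (iii) an axis-aligned isoperimetric inequality for log-concave measures in the Lee--Vempala style. Combining (ii) and (iii) through the Lov\'asz--Simonovits conductance-to-mixing-time machinery then yields the stated mixing bound.

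Step 1 (truncation). Because $f$ is $\mu$-strongly convex, $\pi$ concentrates around its mode $x^\star$: the mass outside a Euclidean ball of radius $R=\Theta\!\bigl(\sqrt{n/\mu}\cdot\max\{1,\sqrt{(1/n)\log(M/\gamma)}\}\bigr)$ is at most $\gamma/4$, and by $M$-warmness the initial mass outside is at most $M\gamma/4$. Coupling the first $O(\log(M/\gamma))$ steps to a chain confined to this ball costs only $O(\gamma)$ in total variation, reducing the problem to bounding the mixing time of the restricted walk on a convex set of diameter $\Theta(R)$.

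Step 2 (one-step overlap). I would show that if $u,v$ lie in the truncation region with $\|u-v\|\le\delta$, then their one-step transition kernels satisfy $\|P_u-P_v\|_{TV}\le 1-c$ for an absolute constant $c$. Conditional on coordinate $i$, the log of the conditional density at $u_{-i}$ differs from that at $v_{-i}$ by a perturbation whose gradient in $x_i$ is at most $L\|u-v\|$ by the $L$-Lipschitz gradient hypothesis. A Pinsker/KL estimate on two log-concave 1D densities confined to an interval of length $\Theta(R)$ then bounds the one-coordinate TV by $O(LR\|u-v\|)$. Averaging over $i$ fixes the overlap length scale $\delta=\widetilde{\Theta}\!\bigl(1/(L^{1/2} R \sqrt{n})\bigr)$ after the usual projection factor of $1/\sqrt{n}$ for the coordinate step.

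Step 3 (isoperimetry) and assembly. I would invoke a Lee--Vempala style axis-aligned isoperimetric inequality for log-concave measures: for any partition $S_1,S_3,S_2$ of the restricted domain such that every axis-parallel segment from $S_1$ to $S_2$ meets $S_3$,
\begin{equation*}
\pi(S_3)\;\gtrsim\;\frac{1}{R\,n^{3.5}\log n}\,\pi(S_1)\,\pi(S_2),
\end{equation*}
which is the log-concave analogue of the inequalities proved in the uniform case. Combined with Step 2 this gives a conductance lower bound $\Phi\gtrsim \widetilde{\Omega}(1/(R L^{1/2} n^{3.75}\log n))$, and the standard $O(\log(M/\gamma)/\Phi^2)$ conversion, after substituting $R\sim\sqrt{n/\mu}$ and $\kappa=L/\mu$, produces the $\kappa^2 n^{7.5}\log^2 n$ scaling of \eqref{eq:mixing-time-with-lee-vempala-isoperimetry}. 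The main obstacle is Step 3: the existing axis-aligned isoperimetric arguments are tailored to uniform measures, and lifting them to general log-concave densities while keeping the dimension dependence at $n^{3.5}\log n$ (rather than the $n^4$ that a naive change-of-measure would give) is what drives the improvement over the $n^8$ bound, and will require either a careful change-of-measure bounded by $\kappa$ or a direct needle/localization-style argument adapted to coordinate moves. Step 2 is technically cleaner, though some care is needed near the truncation boundary, where the restricted density is no longer smooth.
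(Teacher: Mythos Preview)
The main gap is in Step~2. The one-step overlap bound $\|P_u-P_v\|_{TV}\le 1-c$ for Euclidean-close $u,v$ simply does not hold for the Gibbs sampler. The kernel $P_u$ is supported on the union of the $n$ axis-parallel lines through $u$; if $u$ and $v$ differ in two or more coordinates---the generic situation for $u\ne v$, no matter how small $\|u-v\|$ is---these two unions are disjoint and the transition measures are mutually singular, so $\|P_u-P_v\|_{TV}=1$. This is exactly the obstruction the paper's introduction flags (the failure of the usual localization machinery for CHAR), and it rules out the ball-walk/hit-and-run template you are importing. Your Pinsker/KL computation compares the two one-dimensional conditionals $\pi(\cdot\mid u_{-i})$ and $\pi(\cdot\mid v_{-i})$ as densities on $\mathbb{R}$, but that comparison does not translate into overlap of the $n$-dimensional kernels, which live on different lines.

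The paper avoids any overlap estimate. It bounds the $s$-conductance directly: given a partition $A_1\cup A_2$, set $A_i'=\{x\in A_i:P_x(A_{3-i})<1/(2n)\}$ and prove (Lemma~\ref{lemma:axis-disjoint-sets}) that $A_1'$ and $A_2'$ are axis-disjoint. Either one of the $A_i'$ is small---immediately giving flow $\ge (4n)^{-1}(\Pi(A_1)-s)$---or both are large, in which case the $L_0$-isoperimetric Lemma~\ref{thm-l0-isoperimetry} lower-bounds $\Pi(K\setminus A_1'\setminus A_2')$. That lemma is proved by tiling $K$ with axis-aligned cubes of side $\delta\asymp 1/(\sqrt{\kappa L}\,n\log n)$, chosen so that $f$ oscillates by $O(1)$ on each cube; the Laddha--Vempala uniform-cube isoperimetry is then transferred to $\Pi$ cube-by-cube (Lemma~\ref{lemma:cube-isoperimetry-log-concave}), and the ordinary (not axis-aligned) log-concave isoperimetry of Theorem~\ref{thm:log-concave-isoperimetry} handles the boundary of the bulk region. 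Your Step~3 names the right kind of inequality, but the mechanism producing it is this local-uniformity-plus-tiling argument, not a global change of measure applied to a pre-existing axis-aligned result, and it feeds into the conductance bound with no companion overlap step. Your Step~1 also needs adjustment: coupling to a chain confined to the ball is awkward because each Gibbs move resamples an entire coordinate and can leave any ball with positive probability; the paper uses the $s$-conductance framework (Theorem~\ref{thm:lovasz-simonovits}) instead, which lets one ignore sets of measure $\le s$ without altering the chain.
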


The difference of $1/2$ in the exponent of $n$ between \eqref{eq:mixing-time-with-lee-vempala-isoperimetry} and the Laddha and Vempala result
is due to the difference in concentration behavior between strongly log-concave distributions and the uniform distribution, which is log-concave but not strongly log-concave.

To prove Theorem \ref{thm:main-mixing-time}, we establish a conductance
bound in a high-probability region of $\mathbb{R}^n$, specifically, 
a mode-centered Euclidean ball of sufficiently large radius. Building on the proof technique
of Laddha and Vempala, we prove an
isoperimetric inequality for subsets of the Euclidean ball.
The crucial insight that enables us to do so is the fact that smooth functions
are approximately uniform on small-enough domains.
In order to state the isoperimetric inequality, we will need
the following definition.

\begin{definition}[Axis-disjoint sets]
    Two sets $S_1$ and $S_2$ are axis-disjoint if,
    for every pair of vectors $x, y$ where $x\in S_1$ and $y\in S_2$,
    $|\{i\in[n] \,\, s.t. \,\, x_i\neq y_i\}| \geq 2$.
\end{definition}

The Gibbs sampler cannot transition between two axis-disjoint sets in a single iteration.
The following isoperimetric inequality
gives a worst-case lower bound on the measure
of the region of the state space that \emph{is} accessible to the dynamics from either of two axis-disjoint regions, and
is the main technical contribution of this paper.

\begin{lemma}
\label{thm-l0-isoperimetry}
    Consider a probability density function $\pi(x)=Z^{-1}e^{-f(x)}$, $x\in\mathbb{R}^n$, where $f$ is $\mu$-strongly convex in $x$ and has $L$-Lipschitz
    gradients.
    Let $\Pi$ be the associated probability measure.
    Let $x^{\star}=\argmax_x \pi(x)$.
    For $\varepsilon\in(0,1/2)$, define the function
    \begin{equation*}
        r(\varepsilon)=2+2\max\left\{\left(\frac{1}{n}\log\frac{1}{\varepsilon}\right)^{1/4},
        \left(\frac{1}{n}\log\frac{1}{\varepsilon}\right)^{1/2}
        \right\}.
    \end{equation*}
    Consider a Euclidean ball $K$ centered at $x^{\star}$ 
    with radius strictly greater than $r(\varepsilon)\sqrt{n/\mu}$ and no larger than twice that quantity.
    Let $S_1\cup S_2\cup S_3$ be a partition of $K$ such that $S_1$ and $S_2$ are
    axis-disjoint.
    Then there is a positive quantity $\Psi$ such that 
    \begin{equation}
        \Pi(S_3)\geq\Psi\min\left\{\frac{\Pi(S_1)}{5}-\varepsilon,\frac{\Pi(S_2)}{5}-\varepsilon\right\},
    \end{equation}
    where
    \begin{equation}
    \Psi
    \geq
    C\frac{1}{\kappa n^{2+\frac{3}{4}}
    \log n
    \max\left\{1,\sqrt{\frac{1}{n}\log\frac{1}{\varepsilon}}
    \right\}}
    \end{equation}
    for some universal constant $C$.
\end{lemma}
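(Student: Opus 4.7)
The plan is to adapt the first-principles isoperimetric argument of Laddha and Vempala from the uniform measure on a convex body to log-smooth, strongly log-concave measures on the Euclidean ball $K$. The key structural fact is that axis-disjointness forces every axis-aligned line to meet at most one of $S_1$ and $S_2$. Fixing a coordinate direction $u$ with orthogonal $(n-1)$-dimensional hyperplane $\facet$ and writing $\facet_i := \proj_\facet(S_i)$, axis-disjointness implies $\facet_1 \cap \facet_2 = \emptyset$, so $\ext_u(\facet_1) \cap K$ lies in $S_1 \cup S_3$, and any $\Pi$-mass of this extension beyond $\Pi(S_1)$ is necessarily in $S_3$. The heart of the proof is to average this extension-minus-projection gain over the $n$ coordinate directions and turn it into a quantitative lower bound on $\Pi(S_3)$.

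To port the uniform-measure argument of Laddha and Vempala into this setting I would exploit the paper's stated insight that a log-smooth density is approximately uniform on small domains. Concretely, I would partition $K$ into thin slabs perpendicular to $u$ of width $h$ chosen so that $Lh \cdot \mathrm{diam}(K) = O(1)$; on each slab the $L$-Lipschitz bound on $\nabla f$ guarantees that $e^{-f}$ varies by at most a constant factor, so $\Pi$ restricted to a slab is within constants of a uniform measure there and the purely geometric part of the Laddha-Vempala projection/extension argument applies slab-by-slab. The one-dimensional input that converts axis-parallel line-length surplus into $\Pi$-mass surplus is the $\Omega(\sqrt\mu)$ isoperimetric constant of the conditional $\mu$-strongly log-concave density along each axis-$u$-line. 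Summing over slabs and averaging over the $n$ choices of $u$ then produces the claimed $\Psi$; the mass outside $K$ is absorbed into the $-\varepsilon$ slack by the standard sub-Gaussian concentration of a $\mu$-strongly log-concave measure, and the radius $r(\varepsilon)\sqrt{n/\mu}$ is calibrated precisely so that $\Pi(K^c) \le \varepsilon$.

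The denominator factor $\max\{1,\sqrt{n^{-1}\log(1/\varepsilon)}\}$ tracks the radius $r(\varepsilon)$ that feeds into the slab count, $\kappa = L/\mu$ enters through the ratio of the slab-width scale set by $L$ to the ball-radius scale set by $\mu$, and the $n^{2+3/4}\log n$ factor is the Laddha-Vempala geometric loss sharpened by the $\sqrt{n}$ improvement in the effective radius that strong log-concavity affords over a generic convex body. The main obstacle, I expect, is making the per-slab estimate sharp enough to avoid an extra polynomial factor in the slab count: naively composing the Laddha-Vempala geometric argument with a union bound over slabs would blow up the $n$-exponent. To recover the promised $n^{2+3/4}$ scaling one must use axis-disjointness globally (an $S_1$-point in one slab remains axis-disjoint from an $S_2$-point in another) and interleave the slab decomposition with the averaging over coordinate directions so that the geometric losses compound multiplicatively in $n$ rather than in $n \cdot (\text{number of slabs})$.
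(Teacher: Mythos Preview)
Your plan diverges from the paper's proof and has a real gap. The paper does not use thin slabs along a single axis followed by an average over the $n$ directions; it embeds $K$ in a grid of axis-aligned \emph{cubes} of side $\delta\sim(\sqrt{\kappa L}\,n\log n)^{-1}$, chosen so that $f$ varies by at most a fixed small constant on each cube, and then splits the cubes meeting $S_1$ into a bulk set $C_2$ (cubes in which $S_1$ carries more than a $\tfrac{2}{3}e^{-\nu}$ fraction of the cube's $\Pi$-mass) and a boundary set $C_1$. If at least half of $\Pi(S_1)$ lies in $C_1$, the Laddha--Vempala/Fernandez cube isoperimetry (Lemma~\ref{lemma:cube-isoperimetry-uniform}, transferred to $\Pi$ via the local uniformity) is applied cube by cube. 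If at least half lies in $C_2$, then each facet on $\partial\mathcal{C}_2$ borders a non-bulk cube $u$ which, by axis-disjointness plus cube isoperimetry, must carry a constant fraction of its $\Pi$-mass in $S_3$; these per-facet contributions are summed over $\partial_{K'}\mathcal{C}_2$ and then converted into a bound in terms of $\Pi(S_1)$ by invoking the \emph{Lee--Vempala log-concave isoperimetric inequality} (Theorem~\ref{thm:log-concave-isoperimetry}). The exponent $n^{2+3/4}$ is exactly the product $n\cdot n^{1/2}\cdot n^{1/4}\cdot n$ coming from the $1/(2n)$ facet normalization, the cube constant $\psi_c$, the Lee--Vempala constant $\psi_\pi$, and the factor $\delta$.

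The missing idea in your sketch is precisely this second case. The observation that $\ext_u(\facet_1)\cap K\subseteq S_1\cup S_3$ is correct, but nothing in your outline turns it into a lower bound proportional to $\Pi(S_1)$: you give no argument ruling out that the extension surplus is $o(\Pi(S_1))$ simultaneously in every coordinate direction, and averaging over $u$ does not help with that. The one-dimensional $\Omega(\sqrt\mu)$ conditional isoperimetry you invoke is not what the paper uses and cannot by itself produce the $n^{1/4}$ factor; that factor comes from an honestly $n$-dimensional isoperimetric input (Lee--Vempala) applied to the boundary of the bulk cube region inside $K'$. Your attribution of the $n^{3/4}$ to a ``$\sqrt{n}$ radius improvement'' is therefore not how it actually arises, and without the bulk/boundary dichotomy together with Theorem~\ref{thm:log-concave-isoperimetry} your slab-plus-averaging scheme does not close.
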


In independent and contemporaneous work, Ascolani, Lavenant, and Zanella 
proved a per-iteration contraction of the Kullback-Leibler 
divergence between the law of the sampler and the target under the dynamics of the Gibbs sampler 
for the same class of target distributions as in Theorem \ref{thm:main-mixing-time}
\cite{ascolani2024entropycontractiongibbssampler}.
Their analysis implies linear dependence of the mixing time on both the condition number and $n$.
For several technical reasons, it is unclear how to directly compare this with Theorem \ref{thm:main-mixing-time}.
One of these is that the proof of Theorem \ref{thm:main-mixing-time} fundamentally treats the sampler as being constrained to a convex subset of $\mathbb{R}^n$. 
This is not the case in \cite{ascolani2024entropycontractiongibbssampler}.
We leave a careful comparison of the two types of results as an interesting 
open problem.

The remainder of the paper is devoted to the proofs of Theorem \ref{thm:main-mixing-time} and Lemma \ref{thm-l0-isoperimetry},
followed by a discussion.

\section{Preliminaries}

Denote by $\pi(x):\mathbb{R}^n\rightarrow\mathbb{R}_{+}$ the normalized 
density function of a distribution that can be written in the form $e^{-f(x)}/Z$
where $f$ is $\mu$-strongly convex and $L$-smooth. That is, for any
$x,y\in\mathbb{R}^n$ and some $L\geq\mu>0$, $f$ satisfies the following inequalities:
\begin{gather}
    f(y)\geq f(x) + \langle \nabla f(x), y-x\rangle + \frac{\mu}{2}||x-y||^2,\\
    f(y)\leq f(x) + \langle \nabla f(x), y-x\rangle + \frac{L}{2}||x-y||^2.
\end{gather}
$Z$ is the partition function $\int_{\mathbb{R}^n} e^{-f(x)}\,dx$.
$\pi$ is absolutely continuous with respect to the Lebesgue measure, so
we can and will safely replace $d\pi(x)$ with $\pi(x)dx$ in all integrals.

Let $x^{\star}=\argmax_{x\in\mathbb{R}^n}\pi(x)$
be the mode of $\pi$. Define the condition number $\kappa=L/\mu$.

We write $\Pi$ for the probability measure associated with $\pi$, and $\Pi_{n-1}$ for the measure induced by $\Pi$ on 
an $n-1$-dimensional set.
We write $\vol$ for the Lebesgue measure of a set.

Superscripts on $x$ indicate iteration number and subscripts
index vector components.
$[n]$ denotes the set of natural numbers up to and including $n$.
Let $e_i$, $i\in[n]$, denote a unit vector in the $i$th coordinate direction.
We use the notation $x_{-i}$ to represent the $n-1$-dimensional vector
that results from leaving out the $i$th coordinate of $x$.

We are now ready to formally describe the algorithm.

\begin{algorithm}
\caption{Gibbs sampler}
    \begin{algorithmic}
    \STATE Given $x^0\in\mathbb{R}^n$, $\tau\in\mathbb{N}$:
        \FOR{t=1\dots $\tau$}
        \STATE Choose $i\in[n]$ uniformly at random.
        \STATE Draw a sample $y$ from $\pi(x_i|x_{-i})$.
        \STATE $x^{t}_i\rightarrow y$.
        \ENDFOR
    \end{algorithmic}
    \label{alg:gibbs}
\end{algorithm}

The stationary distribution of Algorithm \ref{alg:gibbs} is $\pi$.

In preparation for the proof of Theorem \ref{thm:main-mixing-time},
we give some basic facts about discrete-time Markov chains in 
continuous state spaces.

A discrete-time Markov chain on 
$\mathbb{R}^n$
is specified by an initial state $x^0\in\mathbb{R}^n$ and a conditional 
distribution $P_{x}(A)$. For any measurable subset $A$ of $\mathbb{R}^n$, 
$P_x(A)$ gives the probability of transitioning to a state $y\in A$ 
given that the chain is currently in the state $x$.
A distribution $\pi$ is \emph{stationary} for this chain if $P_x(A)$
is given by $\Pi(A)$ when $x$ is distributed according to $\pi$:
\begin{equation}
    \int_{\mathbb{R}^n} P_x(A)\pi(x)dx\, = \Pi(A).
\end{equation}

A Markov chain is \emph{reversible} if the probability of a transition between
two states is equally likely in both directions.
That is, for any two measurable
sets $A,B$, we must have
\begin{equation}
    \int_{B}P_x(A)\pi(x)dx = \int_{A}P_x(B)\pi(x)dx.
    \label{eq:reversibility}
\end{equation}
Algorithm \ref{alg:gibbs} is reversible. 

The \emph{ergodic flow} $p(A)$ of a set $A$ measures how likely the dynamics is
to exit it in a single step:
\begin{equation}
    p(A)=\int_A P_x\left(\mathbb{R}^n{\setminus}A\right)\pi(x)dx.
\end{equation}
The ratio of the ergodic flow of a set to either its measure 
or the measure of its complement, whichever is smaller, is its \emph{conductance} $\phi(A)$:
\begin{equation}
    \phi(A)=\frac{p(A)}{\min\{\Pi(A),\Pi(\mathbb{R}^n{\setminus}A)\}}.
    \label{eq:defn-of-conductance-of-a-set}
\end{equation}
The conductance $\phi$ \emph{of the chain} is the infimum over all 
measurable sets of their conductances. It is convenient to restrict 
to sets of measure not exceeding a half so we may drop the minimum in \eqref{eq:defn-of-conductance-of-a-set}. Then the conductance is given by the following 
expression:
\begin{equation}
    \phi \equiv \inf_{A:\,\Pi(A)\leq 1/2} \frac{p(A)}{\Pi(A)}.
    \label{eq:defn-of-conductance}
\end{equation}
The conductance quantifies bottlenecks to the dynamics. If it is
small, there is at least one region of the state space that is not easy
to escape from, limiting the ability of the sampler to explore the state
space. 
In what follows, we will work with a weakening of the definition
of the conductance, called the \emph{s-conductance}, which allows us to neglect
sets of measure no more than $s$ in the infimum in \eqref{eq:defn-of-conductance}.
For any $0<s<1/2$, we have
\begin{equation}
    \phi_s \equiv \inf_{A:\,s<\Pi(A)\leq 1/2} \frac{p(A)}{\Pi(A)-s}.
\end{equation}

At any specific iteration number $t$, the current state of the chain,
$x^t$, is distributed according to some distribution $\pi^t$.
We will refer to $\pi^t$ as \emph{the law of the sampler at time $t$}.
We use the total variation distance
to measure the distance between the law of the sampler and its stationary
distribution, defined as follows:
\begin{equation}
    d_{TV}\left(\pi^t, \pi\right)=\sup_{A}\left|\pi^t(A)-\pi(A)\right|.
    \label{eq:defn-TV-distance}
\end{equation}
The supremum in \eqref{eq:defn-TV-distance} is taken over all measurable sets.

The \emph{mixing time} of the chain is the smallest 
number of iterations $\tau(\epsilon)$ needed to drive this distance 
below some small $\epsilon$:
\begin{equation}
    \tau(\epsilon)\equiv\inf\{t\in\mathbb{N} \,\, s.t. \,\, d_{TV}(\pi^t, \pi)\leq\epsilon\}.
\end{equation}

We will work with a \emph{lazy} version of Algorithm \ref{alg:gibbs}, where
at every iteration, with probability $1/2$ we do nothing. This ensures
that $\pi$ is the \emph{unique} stationary distribution for Algorithm \ref{alg:gibbs}.

The last basic notion we shall need is one that characterizes the quality
of the initial distribution $\pi^0$. 
$\pi^0$ is a \emph{warm start}
for $\pi$ if, for some $M>0$,
\begin{equation}
    \sup_{A} \frac{\pi^0(A)}{\pi(A)}\leq M,
\end{equation}
where the supremum is taken over all measurable sets.
A result due to Altschuler and Chewi \cite{altschuler-chewi2024-warm-start} guarantees 
that a warm start to any log-concave distribution can be computed in 
$O(\sqrt{n})$ iterations. 
Since this is dwarfed by the mixing time in Theorem \ref{thm:main-mixing-time},
the warm start is a mild assumption to make for Gibbs sampling from log-concave
distributions.

Our goal is to upper bound $\tau(\epsilon)$ for Algorithm \ref{alg:gibbs}.
In particular, we are interested in identifying the $n$-dependence of $\tau(\epsilon)$.
To do so, we will lower bound the s-conductance and then invert the following 
result due to Lov\'{a}sz and Simonovits \cite{lovasz-simonovits-1993} to upper bound the mixing time.
\begin{theorem}\label{thm:lovasz-simonovits}
Consider a lazy, reversible Markov chain. Let $s\in(0,1/2)$. If the initial distribution is
$M$-warm with respect to the stationary distribution, then after $t$ iterations,
    \begin{equation}
    d_{TV}(\pi^t,\pi)
    \leq Ms + M\left(1-\frac{\phi^2_s}{2}\right)^t.
    \label{eq:tv-distance-thm}
    \end{equation}
\end{theorem}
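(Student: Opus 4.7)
The plan is to follow the classical Lovász--Simonovits extremal function approach: distill the entire discrepancy between $\pi^t$ and $\pi$ into a single concave function on $[0,1]$, prove that the $s$-conductance forces this function to contract geometrically under one step of the chain, and iterate.

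Concretely, let $g_t=d\pi^t/d\pi$ and define
\begin{equation*}
    h_t(u)=\sup\{\pi^t(A)-\pi(A)\,:\,A\text{ measurable},\,\pi(A)=u\},\qquad u\in[0,1].
\end{equation*}
The supremum is attained on super-level sets of $g_t$, so $h_t$ is the integral (up to mass $u$) of the nonincreasing rearrangement of $g_t-1$; in particular $h_t$ is concave with $h_t(0)=h_t(1)=0$, and $d_{TV}(\pi^t,\pi)=\sup_u h_t(u)$. The $M$-warm hypothesis yields $h_0(u)\le (M-1)\min(u,1-u)\le M\min(u,1-u)$, which I will later sharpen (via concavity) to a bound of the form $h_0(u)\le M\sqrt{\min(u-s,1-s-u)}$ on $[s,1-s]$, tuned to match the envelope preserved by the recursion below.

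The core step is a one-step recursion. Fix $A$ with $\pi(A)=u\in(s,1/2]$. Because the chain is reversible, $\int_{A^c}P_x(A)\,d\pi(x)=\int_{A}P_x(A^c)\,d\pi(x)=p(A)$; because it is lazy, $P_x(A)\ge\tfrac12$ on $A$ and $P_x(A^c)\ge\tfrac12$ on $A^c$. Writing
\begin{equation*}
    \pi^{t+1}(A)=\int_A P_x(A)\,d\pi^t(x)+\int_{A^c}P_x(A)\,d\pi^t(x)
\end{equation*}
and using these two facts, the right side can be dominated by a convex combination of $\pi^t$-measures of two sets whose $\pi$-measures are $u-p(A)$ and $u+p(A)$, respectively (this is the standard ``swap'' argument: the laziness gives us enough margin in the transition probabilities to split $A$ and $A^c$ into outgoing and staying parts). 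Taking the supremum over $A$ with $\pi(A)=u$ and invoking $p(A)\ge\phi_s(u-s)$ gives
\begin{equation*}
    h_{t+1}(u)\le\tfrac12\,h_t\bigl(u-\phi_s(u-s)\bigr)+\tfrac12\,h_t\bigl(u+\phi_s(u-s)\bigr),\qquad u\in(s,1/2],
\end{equation*}
with the symmetric statement on $[1/2,1-s)$.

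From here the argument is a clean induction. I guess an envelope of the form $H_t(u)=Ms+M(1-\phi_s^2/2)^t\,\sqrt{\min(u-s,1-s-u)}$ and verify it is preserved: concavity of $\sqrt{\cdot}$ together with a Taylor expansion at the midpoint $u$ shows that the two-point average in the recursion loses a factor of exactly $1-\phi_s^2/2$ per step, while the additive $Ms$ is a fixed point that absorbs any slack near the endpoints $u=s$ and $u=1-s$ (where we have no bound from the $s$-conductance). The $M$-warm initial bound together with the envelope shape shows $h_0\le H_0$, so $h_t\le H_t$ for all $t$, and the theorem follows by taking $\sup_u h_t(u)\le H_t(1/2)=Ms+M(1-\phi_s^2/2)^t\sqrt{1/2-s}\le Ms+M(1-\phi_s^2/2)^t$.

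The main obstacle is the one-step recursion: turning the abstract inequality $p(A)\ge\phi_s(u-s)$ into a statement that $\pi^{t+1}(A)$ is majorized by a two-point average of $h_t$ at $u\pm\phi_s(u-s)$ requires the reversibility/laziness decomposition to be carried out carefully, and one has to argue that the worst-case $A$ at level $u$ in the definition of $h_{t+1}$ is compatible with the test sets at the perturbed levels $u\pm\phi_s(u-s)$. Everything else (the envelope and induction) is routine given this recursion.
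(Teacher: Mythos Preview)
The paper does not prove this theorem. It is stated as a known result due to Lov\'asz and Simonovits \cite{lovasz-simonovits-1993} (with the remark that it generalizes Jerrum--Sinclair to continuous state spaces) and is then invoked as a black box to convert the $s$-conductance lower bound of Section~\ref{sec:conductance-bounds} into the mixing-time bound of Theorem~\ref{thm:main-mixing-time}. There is therefore no proof in the paper to compare your proposal against.

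For what it is worth, your outline is exactly the classical Lov\'asz--Simonovits argument from the cited reference: the concave extremal function $h_t$, the one-step two-point recursion driven by laziness and reversibility, and the induction on a square-root envelope. You correctly identify the recursion as the only nontrivial step. If you carry it out in full you will find that partitioning $A$ and $A^c$ according to whether $P_x(A)\ge 1/2$ actually produces test sets of $\pi$-measure $u\pm 2p(A)$ rather than $u\pm p(A)$; that extra factor of two is precisely what is needed for the square-root envelope to contract by $1-\phi_s^2/2$ per step (with the shift you wrote, the contraction would only be of order $1-\phi_s^2/8$). Apart from this constant-tracking, the base case and the induction go through as you describe.
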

Theorem \ref{thm:lovasz-simonovits} is itself a generalization to arbitrary state
spaces of a result due to Jerrum and Sinclair \cite{jerrum-sinclair-1988}.
Both results are related to Cheeger's inequality \cite{cheeger1970lower} in differential
geometry (see \cite{lovasz1993random} for a treatment of the relationship).
The presence of the warmness parameter $M$ in \eqref{eq:tv-distance-thm} 
is the price to be paid for working not with the conductance 
but with the s-conductance.

\section{Upper bounding the mixing time}
\label{sec:conductance-bounds}

The proof of Theorem \ref{thm:main-mixing-time}, presented in this section, proceeds 
via a lower bound on the $s$-conductance of the Gibbs sampler inside a convex high-probability region within which the gradient norm of $f$ can be bounded.
The argument illustrates the use of Lemma \ref{thm-l0-isoperimetry}, which is proved in the following section.

See Figure \ref{fig:conductance-argument} for an illustration of the overall construction.

We will need the following concentration result for strongly log-concave distributions.

\begin{lemma}
    \label{lemma:dwivedi-concentration}
    Let $\Pi$ be a $\mu$-strongly log-concave measure on $\mathbb{R}^n$
    and $\pi$ the associated density.
    Let $x^{\star}=\argmax_{x\in\mathbb{R}^n}\pi(x)$.
    Let $\mathcal{B}(x,r)$ denote a Euclidean ball of radius $r$ centered at $x$.
    For any $\varepsilon\in (0,1/2)$ and
    \begin{equation}
    r(\varepsilon)=2+2\max\left\{\left(\frac{1}{n}\log\frac{1}{\varepsilon}\right)^{1/4},
    \left(\frac{1}{n}\log\frac{1}{\varepsilon}\right)^{1/2}
    \right\},
    \label{eq:defn-of-r}
    \end{equation}
    we have
    \begin{equation}
    \Pi\left(\mathcal{B}\left(x^{\star},r(\varepsilon)\sqrt{\frac{n}{\mu}}\right)\right)\geq 1-\varepsilon.
    \end{equation}
\end{lemma}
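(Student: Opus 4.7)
The plan is to establish the standard sub-Gaussian concentration of $\|X-x^{\star}\|$ under a $\mu$-strongly log-concave measure, and then verify that $r(\varepsilon)\sqrt{n/\mu}$ is large enough to absorb both the typical scale $\sqrt{n/\mu}$ and the deviation term $\sqrt{\log(1/\varepsilon)/\mu}$.

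First I would bound $\mathbb{E}_{\Pi}\|X-x^{\star}\|$. By Jensen it suffices to show $\mathbb{E}_{\Pi}\|X-x^{\star}\|^2 \leq n/\mu$. An integration-by-parts identity applied to $\pi = e^{-f}/Z$ gives
$$\mathbb{E}_{\Pi}\langle \nabla f(X), X-x^{\star}\rangle = n,$$
the boundary terms vanishing because the strong convexity of $f$ forces $\pi$ to decay faster than any polynomial. On the other hand, $\nabla f(x^{\star})=0$ combined with $\mu$-strong convexity of $f$ yields the pointwise inequality $\langle \nabla f(x), x-x^{\star}\rangle \geq \mu\|x-x^{\star}\|^2$. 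Taking expectations produces $\mathbb{E}_{\Pi}\|X-x^{\star}\|^2 \leq n/\mu$, whence $\mathbb{E}_{\Pi}\|X-x^{\star}\| \leq \sqrt{n/\mu}$.

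Second, I would invoke the sub-Gaussian concentration inequality for Lipschitz functions under a $\mu$-strongly log-concave measure: by the Bakry--\'Emery criterion, $\Pi$ satisfies a log-Sobolev inequality with constant $\mu$, and Herbst's argument then produces
$$\Pi\bigl(g(X) \geq \mathbb{E}_{\Pi} g(X) + t\bigr) \leq \exp(-\mu t^2/2)$$
for every $1$-Lipschitz $g:\mathbb{R}^n\to\mathbb{R}$. Applied to the $1$-Lipschitz function $g(x)=\|x-x^{\star}\|$ and combined with the mean bound above, this yields
$$\Pi\bigl(\|X-x^{\star}\| \geq \sqrt{n/\mu} + t\bigr) \leq \exp(-\mu t^2/2).$$
Choosing $t = \sqrt{2\log(1/\varepsilon)/\mu}$ makes the right-hand side equal to $\varepsilon$, so with probability at least $1-\varepsilon$ we have $\|X-x^{\star}\| \leq \sqrt{n/\mu}\,\bigl(1 + \sqrt{2\log(1/\varepsilon)/n}\bigr)$. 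Writing $a = \log(1/\varepsilon)/n$, the comparison $\max\{a^{1/4}, a^{1/2}\} \geq a^{1/2}$ (valid in both the $a\leq 1$ and $a>1$ regimes) gives $r(\varepsilon) \geq 2 + 2a^{1/2} \geq 1 + \sqrt{2}\,a^{1/2}$, so the bound falls inside the ball of radius $r(\varepsilon)\sqrt{n/\mu}$.

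The main delicate point is justifying the integration by parts; this is routine under the $L$-smoothness assumption used elsewhere in the paper and can be closed by a mollification argument under only strong log-concavity since $\pi$ is sub-Gaussian. The sub-Gaussian Lipschitz tail is standard, and the $(\log(1/\varepsilon)/n)^{1/4}$ term inside the maximum in $r(\varepsilon)$ is simply slack built in to keep $r(\varepsilon) \geq 2$ throughout the low-deviation regime $\log(1/\varepsilon) \lesssim n$, where it is the $\sqrt{n/\mu}$ mean term rather than the deviation that dominates.
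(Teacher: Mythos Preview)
The paper does not prove this lemma at all: immediately after stating it, the paper simply attributes it to Dwivedi, Chen, Wainwright, and Yu \cite{dwivedi19-metropolis} and moves on. So there is no ``paper's own proof'' to compare against; your proposal is a self-contained argument where the paper is content to cite.

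Your argument is correct. The mean bound $\mathbb{E}_{\Pi}\|X-x^{\star}\|^2\leq n/\mu$ via the identity $\mathbb{E}_{\Pi}\langle\nabla f(X),X-x^{\star}\rangle=n$ and strong convexity is standard, the Bakry--\'Emery/Herbst sub-Gaussian tail $\exp(-\mu t^2/2)$ for $1$-Lipschitz functionals is the right tool, and your final comparison $r(\varepsilon)\geq 2+2a^{1/2}\geq 1+\sqrt{2}\,a^{1/2}$ with $a=\tfrac{1}{n}\log\tfrac{1}{\varepsilon}$ closes the argument cleanly. Your reading of the role of the $a^{1/4}$ term---slack that keeps $r(\varepsilon)$ bounded below in the regime $\log(1/\varepsilon)\lesssim n$---is also accurate, though for the bare inequality in the lemma it is not even needed, since $\max\{a^{1/4},a^{1/2}\}\geq a^{1/2}$ trivially.
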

This lemma is due to Dwivedi, Chen, Wainwright, and Yu \cite{dwivedi19-metropolis}.

Let $s\in(0,1/2)$. Take $\varepsilon=s/11$ in Lemma \ref{lemma:dwivedi-concentration}, and
consider a Euclidean ball $K$ centered ar $x^{\star}$ with radius $R$ such that 
\begin{equation}
    r\left(\frac{s}{11}\right)\sqrt{\frac{n}{\mu}}<R\leq2r\left(\frac{s}{11}\right)\sqrt{\frac{n}{\mu}},
\end{equation}
where $r$ is the function defined in \eqref{eq:defn-of-r}. (The reason for defining $R$ thus via two inequalities will be made clear in the proof of Lemma \ref{thm-l0-isoperimetry}.)
Then by Lemma \ref{lemma:dwivedi-concentration}, $\Pi(K)>1-s/11$
and $\Pi(\mathbb{R}^n{\setminus}K)<s/11$.

The following lemma enables us to construct a pair of axis-disjoint subsets of $\mathbb{R}^n$.

\begin{lemma}\label{lemma:axis-disjoint-sets}
    Let $A_1\cup A_2$ be a partition of $\mathbb{R}^n$, and let
    $A_1^{\prime}=\{x\in A_1 \,\,s.t.\,\, P_x(A_2)<\frac{1}{2n}\}$ and
    $A_2^{\prime}=\{x\in A_2 \,\,s.t.\,\, P_x(A_1)<\frac{1}{2n}\}$.
    Then $A_1^{\prime}$ and $A_2^{\prime}$ are axis-disjoint.
\end{lemma}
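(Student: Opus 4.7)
The plan is a proof by contradiction. Suppose there exist points $x\in A_1^{\prime}$ and $y\in A_2^{\prime}$ that fail to be axis-disjoint, i.e.\ $|\{i\in[n] : x_i\neq y_i\}|\leq 1$. Because $A_1$ and $A_2$ are disjoint, $x\neq y$, so $x$ and $y$ must agree in all coordinates except exactly one index, say $i$, with $x_{-i}=y_{-i}$.

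The key observation is that the Gibbs step in direction $i$ depends only on the held-fixed coordinates. That is, letting $L=\{z\in\mathbb{R}^n : z_{-i}=x_{-i}\}$ denote the axis-aligned line through both $x$ and $y$, the one-dimensional conditional distribution $\pi(\,\cdot\,|x_{-i})$ used to resample the $i$th coordinate is identical whether the chain is currently at $x$ or at $y$; call this measure $\mu$ (viewed as a probability measure on $L$ by reading off the $i$th coordinate). Since $L\subset\mathbb{R}^n=A_1\cup A_2$, writing $p=\mu(L\cap A_2)$ we have $\mu(L\cap A_1)=1-p$.

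Now bound the two transition probabilities below by restricting to the single scenario ``coordinate $i$ is selected''. With probability $1/n$ the sampler picks index $i$, after which the resampled point lies in $A_2$ with probability $p$ starting from $x$, and in $A_1$ with probability $1-p$ starting from $y$. Hence
\begin{equation*}
P_x(A_2)\;\geq\;\frac{p}{n},\qquad P_y(A_1)\;\geq\;\frac{1-p}{n},
\end{equation*}
so $P_x(A_2)+P_y(A_1)\geq 1/n$. By a pigeonhole argument, at least one of the two summands is at least $\tfrac{1}{2n}$, which directly contradicts the defining conditions $x\in A_1^{\prime}$ (i.e.\ $P_x(A_2)<\tfrac{1}{2n}$) and $y\in A_2^{\prime}$ (i.e.\ $P_y(A_1)<\tfrac{1}{2n}$).

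There is essentially no technical obstacle: the entire argument rests on the fact that the Gibbs kernel's action on a single-coordinate slice depends only on the other coordinates, which is immediate from the algorithm's definition. The threshold $\tfrac{1}{2n}$ in the hypothesis is exactly what is needed to combine with the $\tfrac{1}{n}$ lower bound on $P_x(A_2)+P_y(A_1)$, so no slack or additional estimate is required.
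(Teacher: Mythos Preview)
Your proof is correct and follows essentially the same route as the paper's: assume two points in $A_1'$ and $A_2'$ lie on a common coordinate line, use that the conditional resampling distribution along that line depends only on the other coordinates, and observe that the two transition probabilities into the opposite set must sum to at least $1/n$. The only cosmetic difference is that the paper restricts both $P_x(A_2)$ and $P_y(A_1)$ to the line to get an exact equality $=1/n$ and then sums the two strict upper bounds to reach $1/n<1/n$, whereas you keep the full transition probabilities, obtain the inequality $\geq 1/n$, and finish with a pigeonhole step.
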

\begin{proof}
    The proof is by contradiction and relies on the fact that the probability of
    finding $x^{t+1}$ along each of the $n$ lines that pass through $x^t$ and run  
    parallel to the coordinate axes is $1/n$.
    
    Assume that $A_1^{\prime}$ and $A_2^{\prime}$ are not axis-disjoint.
    Then there must be a line running parallel to one of the coordinate axes that 
    passes through both sets. Without loss of generality, let this line, call it $\ell_j$, run along
    the coordinate axis $e_j$.
    Consider a point $y\in \ell_j\cap A_1^{\prime}$ and a point $z\in\ell_j \cap A_2^{\prime}$.
    By construction, we have
    \begin{equation*}
        P_{y}(\ell_j\cap A_2)\leq P_{y}(A_2) < \frac{1}{2n},~\text{and}~
        P_{z}(\ell_j\cap A_1)\leq P_{z}(A_1) < \frac{1}{2n}.
    \end{equation*}
    This implies
    \begin{equation*}
        P_{y}(\ell_j\cap A_2)+P_{z}(\ell_j\cap A_1)<\frac{1}{n}.
    \end{equation*}
    Furthermore,
    \begin{equation*}
        P_{y}(\ell_j\cap A_2)+P_{z}(\ell_j\cap A_1)
        =\Pi(\ell_j\cap A_2|y_{-j})+\Pi(\ell_j\cap A_1|z_{-j})=\Pi(\ell_j)=1/n.
    \end{equation*}
    In the second 
    equality we have used the fact that conditioning on $z_{-j}$ is equivalent to
    conditioning on $y_{-j}$ because $y$ and $z$ differ only in their $j^{th}$ coordinate.
    Thus we arrive at the contradiction $1<1$.
\end{proof}

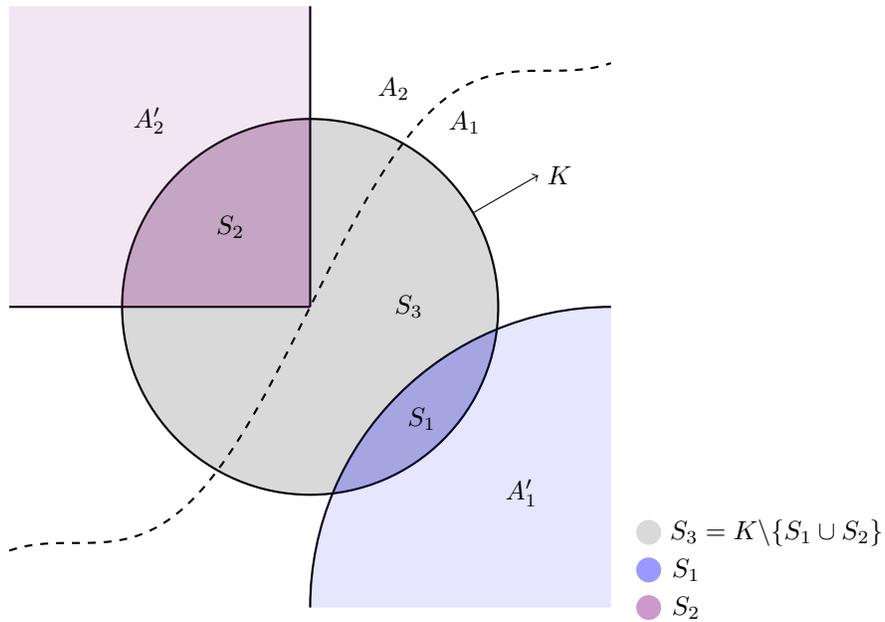
\begin{figure}
\centering
\begin{tikzpicture}
    \draw[thick, name path = circ, fill=gray!30] (0,0) circle(2.5);
    
    \draw[thick, dashed, domain=-4:4, samples=100] plot (\x, {sin(\x r) + \x});
    
    \coordinate (A) at (180:4);
    \coordinate (B) at (90:4);

    \coordinate (C) at (0:4);
    \coordinate (D) at (275:4);

    \coordinate (E) at (135:1.5); 
    \coordinate (F) at (315:2.1); 

    \coordinate (G) at (55:3); 
    \coordinate (H) at (75:3); 

    \coordinate (I) at (315:3.5); 
    \coordinate (J) at (135:3.5); 

    \coordinate (K) at (0: 1);

    \draw[->] (30:2.5) -- (30:3.5);
    \node at (30:3.5) [right] {$K$};

    \node [circle, fill=violet, opacity = 0.4] at (4.5,-4) {};
    \node at (5,-4) {$S_2$};
    \node [circle, fill=blue, opacity = 0.4] at (4.5,-3.5) {};
    \node at (5,-3.5) {$S_1$};
    \node [circle, fill=gray, opacity = 0.3] at (4.5,-3) {};
    \node at (6.2,-3) {$S_3=K{\setminus}\{S_1\cup S_2\}$};

    \draw[thick, name path=wholearc] (4,0) arc (90:180:4); 
    \draw[fill=blue, opacity=0.1, draw=none] (4,-4) -- (4,0) arc(90:180:4) -- (4,-4); 


    \draw [draw=none, name path = bottomarc, intersection segments={of=wholearc and circ,sequence={R2}}];
    \draw [draw=none, name path = toparc, intersection segments={of=circ and wholearc ,sequence={R2}}];

    \tikzfillbetween[of=toparc and bottomarc]{blue, opacity=0.15};

    \draw[-, thick, name path = vertline] (0,0) to (B);
    \draw[-, thick, name path = horline] (0,0) to (A);
    \draw[fill=violet, opacity = 0.1, draw=none] (0,0) -- (B) -- (-4,4) -- (A) -- (0,0);
    \draw[fill=violet, opacity = 0.15, draw=none] (0,0) -- (90:2.5) arc(90: 180: 2.5) -- (0,0);
    
    \node at (E) {$S_2$};
    \node at (F) {$S_1$};

    \node at (G) [right] {$A_1$};
    \node at (H) [right] {$A_2$};

    \node at (I) [right] {$A_1^{\prime}$};
    \node at (J) [right] {$A_2^{\prime}$};

    \node at (K) [right] {$S_3$};
    
\end{tikzpicture}
    \caption{Illustrated here is a partition $A_1\cup A_2$ of $\mathbb{R}^2$ 
    such that for some $s\in(0,1/2)$, $s<\Pi(A_1)\leq 1/2$. The Euclidean ball $K$ is centered at the mode $x^{\star}$ of $\pi$ and is large enough so that its measure differs from unity by a fraction of $s$.
    $A_1^{\prime}$ (in light pink) and $A_2^{\prime}$ (in light blue) are subsets of $A_1$ and $A_2$, respectively. $A_1^{\prime}$ and $A_2^{\prime}$ are axis disjoint.
    $S_1\cup S_2 \cup S_3$ is a partition of $K$ such that $S_i=K\cap A_i^{\prime}$ for $i=1,2$. $\Pi(S_1)\leq \Pi(S_2)$.}
    \label{fig:conductance-argument}
\end{figure}

Consider a partition $A_1\cup A_2$ of $\mathbb{R}^n$ such that $s<\Pi(A_1)\leq 1/2$, and 
subsets $A_1^{\prime}$ and $A_2^{\prime}$ defined as in Lemma \ref{lemma:axis-disjoint-sets}. $A_1^{\prime}$ and $A_2^{\prime}$ are axis-disjoint.
We have
\begin{gather}
    \Pi(A_i)=\Pi(A_i{\setminus} K)+\Pi(A_i\cap K)
    <\frac{s}{11}+\Pi(A_i\cap K),\nonumber\\
    \Rightarrow 
    \Pi(A_i\cap K)> \Pi(A_i)-\frac{s}{11}
    \label{eq:measure-outside-A-in-K}
\end{gather}
for $i=1, 2$.

Let $S_i=K\cap A_i^{\prime}$ for $i=1, 2$ and $S_3=K{\setminus}S_1{\setminus}S_2$. $S_1$ and $S_2$ are axis-disjoint.

We write down two lower bounds on the ergodic flow of $A_1$.
\begin{align}
    p(A_1)
    &=\int_{A_1}P_x(A_2)\,\pi(x)dx\nonumber\\
    &=\int_{A_1{\setminus}K}P_x(A_2)\,\pi(x)dx
    +\int_{S_1}P_x(A_2)\,\pi(x)dx
    +\int_{A_1\cap K{\setminus}S_1}P_x(A_2)\,\pi(x)dx\nonumber\\
    &\geq \int_{A_1\cap K{\setminus} S_1}P_x(A_2)\,\pi(x)dx\nonumber\\
    &\geq \frac{1}{2n}\Pi(A_1\cap K{\setminus}S_1).
    \label{eq:A1-ergodic-flow-1}
\end{align}
In the fourth line we have applied the definition of $A_1^{\prime}$.
Similarly, 
\begin{align}
    p(A_1)
    &=\int_{A_1}P_x(A_2)d\pi(x)\nonumber\\
    &=\int_{A_1}\left[
    P_x(A_2{\setminus}K)
    +P_x(S_2)
    +P_x(A_2\cap K{\setminus}S_2)
    \right]\,\pi(x)dx
    \nonumber\\
    &\geq \int_{A_1}P_x(A_2\cap K{\setminus} S_2)\,\pi(x)dx\nonumber\\
    &=\int_{A_2\cap K{\setminus} S_2}P_x(A_1)\,\pi(x)dx\nonumber\\
    &\geq \frac{1}{2n}\Pi(A_2\cap K{\setminus}S_2).
    \label{eq:A1-ergodic-flow-2}
\end{align}
In the penultimate line above, we have applied the reversibility property \eqref{eq:reversibility} of Algorithm \ref{alg:gibbs}. In the last line, we have applied the definition of $A_2^{\prime}$.

There are two cases to consider depending on the relative sizes of $S_i$ in $A_i\cap K$, $i=1,2$.

\paragraph{Case 1.}
If $\Pi(S_1)<\Pi(A_1\cap K)/2$,
then $\Pi(A_1\cap K{\setminus}S_1)\geq\Pi(A_1\cap K)/2$. 
From \eqref{eq:A1-ergodic-flow-1} we have
\begin{equation}
    p(A_1)
    \geq\frac{1}{2n}\Pi(A_1\cap K{\setminus}S_1)
    \geq \frac{1}{4n}\Pi(A_1\cap K)
    >\frac{1}{4n}\left(\Pi(A_1)-\frac{s}{11}\right)
    >\frac{1}{4n}\left(\Pi(A_1)-s\right).
\end{equation}
Similarly, if $\Pi(S_2)<\Pi(A_2\cap K)/2$, 
then $\Pi(A_2\cap K{\setminus}S_2)\geq\Pi(A_2\cap K)/2$, and
\eqref{eq:A1-ergodic-flow-2} gives the bound
\begin{equation}
    p(A_1)\geq\frac{1}{2n}\Pi(A_2\cap K{\setminus}S_2)
    \geq \frac{1}{4n}\Pi(A_2\cap K)
    >\frac{1}{4n}\left(\Pi(A_2)-\frac{s}{11}\right)
    \geq\frac{1}{4n}\left(\Pi(A_1)-\frac{s}{11}\right)
    \geq\frac{1}{4n}\left(\Pi(A_1)-s\right),
\end{equation}
where the penultimate inequality follows by construction.

Thus if either either $S_1$ or $S_2$ is small with respect to $A_1\cap K$ or $A_2\cap K$, respectively, 
the ergodic flow of $A_1$ is lower bounded by $\frac{1}{4n}\left(\Pi(A_1)-s\right)$.

\paragraph{Case 2.}
When both $S_1$ and $S_2$ 
are large in measure with respect to $A_1\cap K$ and $A_2\cap K$, respectively, 
Lemma \ref{thm-l0-isoperimetry} is needed
to lower bound the measure of $K{\setminus}S_1{\setminus}S_2$.

In particular, if $\Pi(S_1)\geq\Pi(A_1\cap K)/2$ and $\Pi(S_2)\geq\Pi(A_2\cap K)/2$,
then, summing \eqref{eq:A1-ergodic-flow-1} and \eqref{eq:A1-ergodic-flow-2},
we have
\begin{align}
    p(A_1)
    &> \frac{1}{2} \left(\frac{1}{2n}\Pi(A_1\cap K{\setminus}S_1)+\frac{1}{2n}\Pi(A_2\cap K{\setminus}S_2)\right)\nonumber\\
    &=\frac{1}{4n}\Pi(S_3)\nonumber\\
    &>\frac{\Psi}{4n}\left(\frac{\Pi(S_1)}{5}-\frac{s}{11}\right)
    \geq \frac{\Psi}{4n}\left(\frac{1}{5}\frac{\Pi\left(A_1\cap K\right)}{2}-\frac{s}{11}\right)
    \geq \frac{\Psi}{40n}\left(\Pi(A_1)-s\right).
\end{align}
In the third line we have applied
Lemma \ref{thm-l0-isoperimetry} with $\varepsilon=s/11$.

This concludes discussion of Case 2.

We note that
\begin{equation}
    \min\left\{\frac{1}{4n},\frac{\Psi}{40n}\right\}=\frac{\Psi}{40n}\,\,\forall n\geq 2.
\end{equation}

Thus for any set $A_1\subset\mathbb{R}^n$ such that $s<\Pi(A_1)\leq 1/2$ for $s\in(0,1/2)$,
\begin{equation}
    \frac{p(A_1)}{\Pi(A_1)-s}>\frac{\Psi}{40n}.
\end{equation}
This implies the following lower bound on the the $s$-conductance of the Gibbs sampler:
\begin{equation}
    \phi_s> \frac{\Psi}{40n}.
    \label{eq:lower-bound-phis}
\end{equation}

We are now ready to prove Theorem \ref{thm:main-mixing-time}.

\begin{proof}[Proof of Theorem \ref{thm:main-mixing-time}]
   Making the assignment $s=\gamma/2M$ in \eqref{eq:tv-distance-thm}, we have
   \begin{equation}
       d_{TV}(\pi^{\tau},\pi)
       \leq\frac{\gamma}{2} + M\left(1-\frac{\phi_s^2}{2}\right)^{\tau}
       \leq\frac{\gamma}{2} + Me^{-\tau\phi_s^2/2}.
   \end{equation}
   To ensure $d_{TV}(\pi^{\tau},\pi)\leq\gamma$, it is therefore enough to 
   choose $\tau$ such that
   \begin{equation}
       2Me^{-\tau\phi_s^2/2}=\gamma 
       \Rightarrow \tau=\frac{2}{\phi_s^2}\log\frac{2M}{\gamma}.
   \end{equation}
   Applying the lower bound \eqref{eq:lower-bound-phis} to $\phi_s$, we have
   \begin{equation}
       \tau< 2^{5}10^{2}\frac{n^2}{\Psi^2}\log\frac{2M}{\gamma}.
   \end{equation}
\end{proof}

\section{Isoperimetry}
\label{sec:isoperimetry}

In this section we prove Lemma \ref{thm-l0-isoperimetry}.
The proof is bipartite.
In the first part, we write down an isoperimetric inequality on an 
$n$-dimensional cube of side $\delta$ contained in $K$. This inequality relies on
a similar result proved by Laddha and Vempala for a uniform
distribution on the cube, and on the fact that smooth
functions are approximately uniform on small domains.
$\delta$ must be chosen carefully to ensure a good approximation.
In the second part, we tile $S_1$ (see Figure \ref{fig:conductance-argument}) with these cubes to establish 
the lower bound on $\Psi$.

The proof of Lemma \ref{thm-l0-isoperimetry} is given in Section \ref{sec:isoperimetry-proof}.
In Section \ref{sec:approx-theory} we develop the necessary approximation
results.

\subsection{Approximation theory}
\label{sec:approx-theory}

$K$ is a Euclidean ball centered at $x^{\star}$. For $\varepsilon\in(0,1/2)$,
the radius of $K$ is lower and upper bounded as follows:
\begin{equation}
    r(\varepsilon)\sqrt{\frac{n}{\mu}}<R\leq 2r(\varepsilon)\sqrt{\frac{n}{\mu}},
    \label{eq:defn-of-R}
\end{equation}
where $r$ is the function defined in Lemma \ref{lemma:dwivedi-concentration}.

We work with cubes that are \emph{axis-aligned}:

\begin{definition}[Axis-aligned cubes.]
    An axis-aligned cube of side $\delta$ is the set 
    $\{x\in\mathbb{R}^n: \, ||x-x_0||_{\infty}\leq\delta\}$, 
    where $x_0$ is any reference point in the cube.
\end{definition}

\subsubsection{Control of $f$ in a cube}

Consider an $n$-dimensional axis-aligned cube $C\subset K$ of side $\delta$.

For any two points $x,y\in C$,
Taylor's theorem guarantees the existence of a point $z$, also in $C$, such that 
\begin{equation*}
    f(y)=f(x)+\langle\nabla f(x), y-x\rangle + \frac{1}{2}(y-x)^{\top}\nabla^2f(z)(y-x).
\end{equation*}
Rearranging and applying the triangle inequality, we have
\begin{align}
    |f(y)-f(x)|
    &\leq ||\nabla f(x)||\,\,||y-x|| + \frac{1}{2}||\nabla^2 f(z)||_{\text{op}}\,\,||y-x||^2.
    \label{eq:cube-approx-inter1}
\end{align}

The maximal Euclidean distance between any two points in $C$
is bounded above by $\delta\sqrt{n}$. Smoothness guarantees
that the operator norm of $\nabla^2 f(z)$ is bounded by $L$ for any $z\in\mathbb{R}^n$.
Moreover, for any $x\in K$,
\begin{equation}
    ||\nabla f(x)||
    =||\nabla f(x)-\nabla f(x^{\star})||
    \leq L||x-x^{\star}||
    \leq 2Lr(\varepsilon)\sqrt{\frac{n}{\mu}},
\end{equation}
where $\varepsilon\in(0,1/2)$.
The last inequality follows from the upper bound \eqref{eq:defn-of-R} on the radius of $K$.
Thus we arrive at the bound
\begin{equation}
    |f(y)-f(x)|\leq 2\frac{L}{\sqrt{\mu}}\delta r(\varepsilon)n + \frac{1}{2}L\delta^2n.
    \label{eq:cube-approx-inter2}
\end{equation}

In order to control the right-hand side of \eqref{eq:cube-approx-inter2},
we will need $\delta$ to scale inversely in $n$ and $L$
with the appropriate powers.
We note that 
\begin{equation}
    \frac{r(\varepsilon)}{\max\left\{1,\sqrt{\frac{1}{n}\log\frac{1}{\varepsilon}}\right\}}\leq 4.
\end{equation}
For some positive constant $c$
and function $\sigma>0$, we make
the assignment
\begin{equation}
    \delta 
    = \left(8c \sqrt{\kappa L}n^{1+\sigma}
    \max\left\{1,\sqrt{\frac{1}{n}\log\frac{1}{\varepsilon}}\right\}
    \right)^{-1},
    \label{eq:defn-of-delta}
\end{equation}
in terms of which we have the bound
\begin{equation}
    |f(y)-f(x)|\leq 
    \frac{1}{cn^{\sigma}}
    + \frac{1}{128 c^2\kappa n^{1+2\sigma}
    \left(\max\left\{1,\sqrt{\frac{1}{n}\log\frac{1}{\varepsilon}}\right\}\right)^2}
    \leq
    \frac{1}{cn^{\sigma}}
    + \frac{1}{128 c^2 n^{1+2\sigma}}.
\end{equation}
$\sigma$ and $c$ must be large enough to ensure that $|f(y)-f(x)|$ is both small $\forall n\geq 2$ and
goes to zero as $n\rightarrow\infty$, but no larger than
necessary because 
the final mixing time bound will have $c^2n^{-2\sigma}$ dependence.
We will find it useful to enforce the specific
constraint
\begin{equation*}
    |f(y)-f(x)|\leq \log\frac{6}{5} \,\,\, \forall n\geq 2.
\end{equation*}
Taking $\sigma=\frac{\log\log n}{\log n}$ and $c=8$ is enough to do so.

We finally arrive at the bound
\begin{equation}
    |f(y)-f(x)|\leq
    \frac{1}{8\log n}
    + \frac{1}{2^{13} n\log n}
    \equiv
    \nu(n)
    \leq \log\frac{6}{5} \,\,\forall n\geq 2.
    \label{eq:defn-of-nu}
\end{equation}
for the difference in $f$ between any two points in $C$.
We have defined the function $\nu(n)$
for the sake of notational convenience.
We will omit writing its $n$-dependence in the sequel.

\eqref{eq:defn-of-nu} enables the following bound on how much the 
ratio $\pi(x)/\pi(y)$ deviates from unity within $C$. 

\begin{fact}\label{fact:bound-on-density-ratio-in-cube}
For any $x,y\in C\subset K$,
\begin{equation}
    \left|e^{-(f(x)-f(y))}-1\right|\leq e^{\nu} - 1.
    \label{eq:area-approx-intermediate}
\end{equation}
\end{fact}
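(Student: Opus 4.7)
The plan is to reduce the claim directly to the uniform bound on $|f(x)-f(y)|$ that was just established as \eqref{eq:defn-of-nu}. Since both $x$ and $y$ lie in the same axis-aligned cube $C\subset K$, the analysis leading to \eqref{eq:defn-of-nu} gives $|f(x)-f(y)|\leq \nu$, which is equivalent to the two-sided inequality
\begin{equation*}
    -\nu \;\leq\; f(y)-f(x) \;\leq\; \nu.
\end{equation*}
Exponentiating (the exponential is monotone) yields $e^{-\nu}\leq e^{-(f(x)-f(y))}\leq e^{\nu}$, and subtracting $1$ gives
\begin{equation*}
    e^{-\nu}-1 \;\leq\; e^{-(f(x)-f(y))}-1 \;\leq\; e^{\nu}-1.
\end{equation*}

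Next I would take absolute values on both sides. The resulting bound is $\max\{e^{\nu}-1,\;1-e^{-\nu}\}$, so the claim \eqref{eq:area-approx-intermediate} follows once one verifies $e^{\nu}-1\geq 1-e^{-\nu}$ for $\nu\geq 0$. This is an elementary one-line observation: the inequality rearranges to $e^{\nu}+e^{-\nu}\geq 2$, which holds by AM--GM (or by noting that the function $\nu\mapsto e^{\nu}+e^{-\nu}-2$ vanishes at $0$ and has nonnegative derivative for $\nu\geq 0$). Since $\nu>0$ by its definition in \eqref{eq:defn-of-nu}, the maximum is attained by $e^{\nu}-1$, giving the stated bound.

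There is no real obstacle here; the entire content of the fact is the monotonicity of the exponential combined with the preceding pointwise bound on $|f(x)-f(y)|$. The only minor subtlety is to keep track of which side dominates when converting the two-sided bound into an absolute-value bound, which is handled by the AM--GM step above. I would keep the proof to a few lines, essentially as written out, and emphasize that this fact is the device through which the approximate uniformity of $e^{-f}$ on small cubes is turned into an approximate equality of density ratios that will drive the isoperimetric argument in the next subsection.
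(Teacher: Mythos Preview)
Your proposal is correct and is essentially the same argument as the paper's: both use the bound $|f(x)-f(y)|\leq\nu$ from \eqref{eq:defn-of-nu}, translate it into the two one-sided bounds $1-e^{-\nu}$ and $e^{\nu}-1$, and then verify $1-e^{-\nu}\leq e^{\nu}-1$ (the paper compares derivatives at $\nu=0$, you use AM--GM, which amounts to the same thing). The only cosmetic difference is that the paper splits explicitly into the cases $f(x)\geq f(y)$ and $f(x)<f(y)$, whereas you handle both at once via the two-sided inequality.
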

\begin{proof}
If $f(x)\geq f(y)$, then \eqref{eq:defn-of-nu} implies 
$e^{f(x)-f(y)}\leq e^{\nu}$ and therefore
\begin{equation}
    \left|e^{-(f(x)-f(y))}-1\right|=1-e^{-(f(x)-f(y))}\leq 1-e^{-\nu}.
    \label{eq:density-ratio-bound-intermediate}
\end{equation}
If $f(x)<f(y)$, then \eqref{eq:defn-of-nu} implies $e^{-(f(x)-f(y))}\leq e^{\nu}$
and we have
\begin{equation}
    \left|e^{-(f(x)-f(y))}-1\right|= e^{-(f(x)-f(y))}-1\leq e^{\nu}-1.
\end{equation}

$1-e^{-\nu}=e^{\nu}-1$ for $\nu=0$, and $\forall\nu>0$,
\begin{equation*}
    \frac{d}{d\nu}(1-e^{-\nu})< \frac{d}{d\nu}(e^{\nu}-1),
\end{equation*}
which implies $1-e^{-\nu} < e^{\nu}-1$.

\end{proof}

In Sections \ref{sec:volume-approx} and \ref{sec:area-approx} 
we use Fact \ref{fact:bound-on-density-ratio-in-cube} 
to write down uniform approximations to $\Pi$
and $\Pi_{n-1}$ for subsets of $C$.

\subsubsection{A uniform approximation for $\Pi$
}
\label{sec:volume-approx}

We continue discussing an axis-aligned cube $C\subset K$ of side $\delta$.
Consider any $S\subseteq C$.
Let
\begin{equation}
    w\equiv \argmin_{x\in C}f(x) = \argmax_{x\in C}\pi(x).
\end{equation}
$C$ is a bounded set and $f$ is strongly convex, and so we are guaranteed that
$w$ exists.
We proceed to give an error bound for the uniform approximation
$\pi(w)\vol(S)$ to $\Pi(S)$.

Since $\pi(w)\geq \pi(x)\,\,\forall x\in C$, we have the simple upper bound
$\Pi(S)\leq\pi(w)\vol(S)$. For a lower bound on $\Pi(S)$, we
have
\begin{align*}
    \pi(w) \vol(S)-\Pi (S)
    &= \int_S dx \left(\pi(w)-\pi(x)\right)\\
    &\leq \pi(w)\int_S dx \left(1-e^{-(f(x)-f(w))}\right)\\
    &\leq \pi(w)\left(1-e^{-\nu}\right)\int_S dx\\
    &= \pi(w)(1-e^{-\nu})\vol(S).
\end{align*}
In the penultimate step above we have applied \eqref{eq:density-ratio-bound-intermediate}.

It will be useful to summarize the discussion so far in the following Fact:
\begin{fact}\label{fact:vol-pi-approximations-on-the-cube}
Let 
$w\equiv \argmin_{x\in C}f(x)$. For any $S\subseteq C\subset K$, we have
\begin{equation}
    \frac{\Pi(S)}{\pi(w)\vol(S)}\in [e^{-\nu},1],
\end{equation}
where $\nu$ is as defined in \eqref{eq:defn-of-nu}.
\end{fact}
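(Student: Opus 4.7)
The statement asserts two inequalities, one trivial and one requiring the control from Fact~\ref{fact:bound-on-density-ratio-in-cube}, so the proof plan is just to dispatch both cleanly.

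The upper bound is immediate from the definition of $w$: since $w$ minimizes $f$ over $C$, we have $\pi(w) \geq \pi(x)$ for every $x \in C$, and in particular for every $x \in S \subseteq C$. Integrating this pointwise inequality gives
\begin{equation*}
\Pi(S) = \int_S \pi(x)\,dx \leq \pi(w) \vol(S),
\end{equation*}
which is the right-hand endpoint of the claimed interval.

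For the lower bound, my plan is to estimate the defect $\pi(w)\vol(S) - \Pi(S)$ from above. Writing $\pi(x) = \pi(w) e^{-(f(x)-f(w))}$, I compute
\begin{equation*}
\pi(w)\vol(S) - \Pi(S) = \pi(w)\int_S \bigl(1 - e^{-(f(x)-f(w))}\bigr)\,dx.
\end{equation*}
Here is where Fact~\ref{fact:bound-on-density-ratio-in-cube} enters: since $w$ is the minimizer of $f$ on $C$, we have $f(x) - f(w) \geq 0$ for every $x \in S$, so the integrand is nonnegative and we are on the branch of the Fact where $1 - e^{-(f(x)-f(w))} \leq 1 - e^{-\nu}$ (cf.\ the intermediate bound \eqref{eq:density-ratio-bound-intermediate}). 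Substituting this pointwise bound and pulling it out of the integral gives
\begin{equation*}
\pi(w)\vol(S) - \Pi(S) \leq \pi(w)(1 - e^{-\nu})\vol(S),
\end{equation*}
which rearranges to $\Pi(S) \geq e^{-\nu}\,\pi(w)\,\vol(S)$, the left-hand endpoint.

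There is no real obstacle here; the only subtle point is remembering that since $w$ is defined as a minimizer on $C$, the one-sided bound $1 - e^{-\nu}$ (rather than the coarser $e^{\nu} - 1$) from Fact~\ref{fact:bound-on-density-ratio-in-cube} suffices, and that the existence of $w$ itself is guaranteed because $f$ is continuous and $C$ is compact (strong convexity is not strictly needed for this step, just compactness, but it is already assumed globally). Combining the two estimates yields the asserted two-sided ratio bound.
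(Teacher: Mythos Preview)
Your proof is correct and follows essentially the same route as the paper: the upper bound by the definition of $w$, and the lower bound by writing out the defect $\pi(w)\vol(S)-\Pi(S)$ and applying the one-sided estimate \eqref{eq:density-ratio-bound-intermediate} pointwise under the integral. Your remark that compactness of $C$ (rather than strong convexity) already guarantees the existence of $w$ is a harmless sharpening.
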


We are not limited to choosing $w$ as a reference point 
in $C$. We do so for convenience, but any choice of reference point
is sufficient.

\subsubsection{Approximating $\Pi$ by $\delta\cdot\Pi_{n-1}$
}
\label{sec:area-approx}

Let $\beta$ be a facet of $C$
normal to the coordinate direction $e_i$ and let $\beta_a$
be the $n-1$ dimensional set in $C$ that results from translating
$\beta$ to the location $x_i=a$ in $C$. See Figure \ref{fig:extensions-close-up} for an illustration
in three dimensions. Let $\omega\subseteq \beta$. Let $\omega_a$ be the
translation of $\omega$ to $x_i=a$, and let $B$ be the $n$ dimensional
subset of $C$ that is swept out by $\omega$ as it is translated along $e_i$ 
from $\beta$ all the way to the opposite facet. 
We will call this the \emph{extension of $\omega$ in $C$}. Without loss of generality,
let $x_i=0$ in $\beta$.

\begin{figure}
    \centering
\begin{tikzpicture}[scale=2]
    \draw[thick] (0, 0, 0) -- (1, 0, 0) -- (1, 1, 0) -- (0, 1, 0) -- cycle; 
    \draw[thick] (0, 0, 1) -- (1, 0, 1) -- (1, 1, 1) -- (0, 1, 1) -- cycle; 
    \draw[thick] (0, 0, 0) -- (0, 0, 1); 
    \draw[thick] (1, 0, 0) -- (1, 0, 1); 
    \draw[thick] (1, 1, 0) -- (1, 1, 1); 
    \draw[thick] (0, 1, 0) -- (0, 1, 1); 

    \draw[] (0.5, 0, 0) -- (1, 0, 0) -- (1, 0, 1) -- (0.5, 0, 1) -- cycle; 
    \draw[] (0.5, 0.5, 1) -- (1, 0.5, 1) -- (1, 0.5, 0) -- (0.5, 0.5, 0) -- cycle; 
    \draw[thick] (0.5, 0, 0) -- (0.5, 0.5, 0); 
    \draw[thick] (0.5, 0, 1) -- (0.5, 0.5, 1); 

    \draw[->] (1, 0, 0) -- (1.5, 0, 0) node[right] {$e_1$};
    \draw[->] (0, 1, 0) -- (0, 1.5, 0) node[above] {$e_2$};
    \draw[->] (0, 0, 1) -- (0, 0, 2) node[right] {$e_3$};

    \fill[black] (0, 0, 0) circle (0.5pt); 
    \node[left] at (0, 0, 0) {$(0,0,0)$}; 
    \fill[black] (0, 0, 0.5) circle (0.5pt); 
    \node[left] at (0, 0, 0.5) {$(0,0,a)$}; 
    
    \fill[black] (0, 0, 1) circle (0.5pt); 
    \node[left] at (0, 0, 1) {$(0,0,\delta)$}; 

    \fill[gray, opacity=0.5] (0, 0, 0.5) -- (1, 0, 0.5) -- (1, 1, 0.5) -- (0, 1, 0.5) -- cycle;
    \fill[yellow, opacity=0.3] (0, 0, 0) -- (1, 0, 0) -- (1, 1, 0) -- (0, 1, 0) -- cycle;

    \fill[yellow, opacity=0.6] (1, 0, 0) -- (1, 0.5, 0) -- (0.5, 0.5, 0) -- (0.5, 0, 0) -- (1, 0, 0);
    \fill[gray, opacity=0.5] (1, 0, 0.5) -- (1, 0.5, 0.5) -- (0.5, 0.5, 0.5) -- (0.5, 0, 0.5) -- (1, 0, 0.5);


    \draw[->] (0.2, 1, 0) -- (0.4, 1.4, 0) node[right] {$\beta$};
    \draw[->] (0.3, 0.8, 0) -- (0.5, 1.2, 0) node[right] {$\beta_a$};
    
\end{tikzpicture}
    \caption{Pictured here is a cube of side $\delta$.
    $\beta$, shaded in light yellow, is a facet normal to the $e_3$ coordinate axis. $\beta_a$, shaded in gray, is the set that results from translating $\beta$ along the $e_3$ axis to $x_3=a$.
    $\omega$ is the subset of $\beta$ shaded in a darker yellow, and $\omega_a$
    is the subset of $\beta_a$ shaded in a darker gray.
    Outlined in black is the 
    extension $B$ of $\omega$ along $e_3$ in the cube.
    }
    \label{fig:area-approximations}
\end{figure}
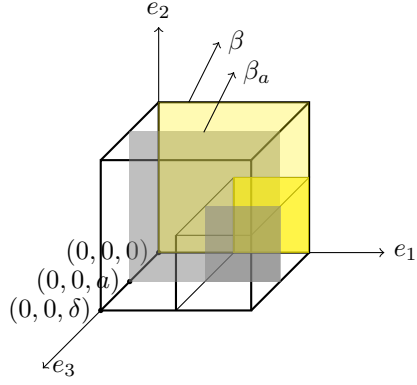

We can write $\Pi(B)$ as follows:
\begin{equation}
    \Pi(B) = \int_{0}^{\delta} dx_i \, \Pi_{n-1}(\omega_{x_i}).
\end{equation}
Denote by $x_i^a$ any vector with $i^{th}$ coordinate fixed to $a$.
We will now give an error bound for approximating
$\Pi(B)$ with $\delta\cdot\Pi_{n-1}(\omega_b)$ for any $x_i^b\in B$.

Fix any two values $a$ and $b$ of $x_i$ in $B$.
We have
\begin{align}
    \left|\Pi_{n-1}(\omega_a)-\Pi_{n-1}(\omega_b)\right|
    &=\left|\int_B dx_{-i} \, 
    \left[\pi(x_i^a)-\pi(x_i^b)\right]\right|\nonumber\\
    &\leq\int_B dx_{-i} \, \pi\left(x_i^b\right)
    \left|e^{-(f(x_i^a)-f(x_i^b))}-1\right|\nonumber\\
    &\leq (e^{\nu}-1)\int_B dx_{-i} \, \pi(x_i^b)\nonumber\\
    &=(e^{\nu}-1)\Pi_{n-1}(\omega_b),
    \label{eq:areas-are-approximately-equal}
\end{align}
where we have applied Fact \ref{fact:bound-on-density-ratio-in-cube}
in the penultimate line.
Therefore,
\begin{align}
    \left|\Pi(B) - \delta \cdot \Pi_{n-1}(\omega_b)\right|
    &=\left|\int_0^{\delta} dx_i \, \left[\Pi_{n-1}(\omega_{x_i})-\Pi_{n-1}(\omega_b)\right]\right|\nonumber\\
    &\leq \int_0^{\delta} dx_i \, 
    \left|\Pi_{n-1}(\omega_{x_i})-\Pi_{n-1}(\omega_b)\right|\nonumber\\
    &\leq (e^{\nu}-1) \Pi_{n-1}(\omega_b) \int_0^{\delta} dx_i
    \nonumber\\
    &=(e^{\nu}-1) \cdot \delta \cdot \Pi_{n-1}(\omega_b).
\end{align}

Since this bound holds for any value of $x^b_i$ in $C$, we are free to choose
$b$ to be the value of $x_i$ on one of the facets $\beta$ of the cube.
We have argued the following fact.

\begin{fact}\label{fact:area-approx}
Consider an axis-aligned cube $C\subset K$ of side $\delta$ and a log-smooth
strongly log-concave probability measure $\Pi$ with density $\pi$.
Let $\beta$ be any $n-1$-dimensional facet of $C$ and $\omega$ 
any subset of $\beta$. Let $B$ be the extension of $\omega$ in $C$ along the coordinate direction normal to $\beta$.
Then,
    \begin{equation}
    \left|\Pi(B) - \delta \cdot \Pi_{n-1}(\omega)\right|
    \leq(e^{\nu}-1) \cdot \delta \cdot \Pi_{n-1}(\omega),
    \end{equation}
    where $\nu$ is as defined in \eqref{eq:defn-of-nu}.
\end{fact}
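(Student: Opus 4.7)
The plan is to reduce the $n$-dimensional integral defining $\Pi(B)$ to a one-dimensional integral along the coordinate direction normal to $\beta$, and then control the integrand using the pointwise density ratio bound from Fact \ref{fact:bound-on-density-ratio-in-cube}. Concretely, let $e_i$ be the coordinate direction normal to $\beta$, and without loss of generality assume $\beta$ sits at $x_i=0$ in $C$, so that $B$ is swept out as $\omega$ is translated from $x_i=0$ to $x_i=\delta$. Writing $\omega_t$ for the translate of $\omega$ to height $x_i=t$, I would first apply Fubini to obtain
\begin{equation*}
    \Pi(B) = \int_0^{\delta} \Pi_{n-1}(\omega_t)\, dt.
\end{equation*}

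Next, I would show that the slice measures $\Pi_{n-1}(\omega_t)$ are nearly constant in $t$. Fix any two heights $a,b\in[0,\delta]$, and for each $x_{-i}$ in the projection of $\omega$ onto the coordinate hyperplane, write $x^a$ and $x^b$ for the corresponding points in $\omega_a$ and $\omega_b$. Since both lie in $C$, Fact \ref{fact:bound-on-density-ratio-in-cube} gives $|e^{-(f(x^a)-f(x^b))}-1|\le e^{\nu}-1$, so
\begin{equation*}
    \bigl|\Pi_{n-1}(\omega_a)-\Pi_{n-1}(\omega_b)\bigr|
    \le \int \pi(x^b)\,\bigl|e^{-(f(x^a)-f(x^b))}-1\bigr|\, dx_{-i}
    \le (e^{\nu}-1)\,\Pi_{n-1}(\omega_b).
\end{equation*}

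Then I would specialize $b$ to the boundary value $x_i=0$, for which $\omega_b=\omega$, and combine the two displays above:
\begin{equation*}
    \bigl|\Pi(B)-\delta\cdot\Pi_{n-1}(\omega)\bigr|
    = \left|\int_0^{\delta}\bigl[\Pi_{n-1}(\omega_t)-\Pi_{n-1}(\omega)\bigr]\,dt\right|
    \le (e^{\nu}-1)\,\delta\cdot \Pi_{n-1}(\omega),
\end{equation*}
which is the claimed bound.

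There is no real obstacle in this proof, since the heavy lifting—turning the smoothness and strong convexity of $f$ together with the choice of $\delta$ in \eqref{eq:defn-of-delta} into the uniform density-ratio bound $|e^{-(f(x)-f(y))}-1|\le e^{\nu}-1$ on $C$—has already been done in Fact \ref{fact:bound-on-density-ratio-in-cube}. The only care needed is to keep straight what is an $n$-dimensional versus an $(n-1)$-dimensional measure and to invoke the density-ratio bound along pairs of points that both live in $C$ (which holds by construction, since $B\subseteq C$). The scaling factor $\delta$ on the right side is exactly the length of the integration interval, so the estimate is tight in the sense that the $(e^{\nu}-1)$ factor cannot be improved without strengthening Fact \ref{fact:bound-on-density-ratio-in-cube}.
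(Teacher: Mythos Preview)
Your proposal is correct and follows essentially the same approach as the paper: write $\Pi(B)$ as a one-dimensional integral of slice measures $\Pi_{n-1}(\omega_t)$, use Fact~\ref{fact:bound-on-density-ratio-in-cube} to show any two slice measures differ by at most a factor $(e^{\nu}-1)$, and then specialize the reference slice to the facet $\beta$. The paper's argument is identical in structure and detail.
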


\subsection{$L_0$ isoperimetry}
\label{sec:isoperimetry-proof}

\subsubsection{Cube isoperimetry}

The following lemma is due to Laddha and Vempala \cite{laddha-vempala}.

\begin{lemma}
\label{lemma:cube-isoperimetry-uniform}
    Consider an axis-aligned cube $C$ of side length $\delta$. 
    Let $S_1$ and $S_2$ be two axis-disjoint subsets of $C$ such that $\vol(S_1)\leq (2/3)\vol(C)$.
    Let $S_3=C{{\setminus}} \{S_1\cup S_2\}$. Then
    \begin{equation*}
        \vol(S_3)\geq \frac{\psi_c}{4}\vol(S_1)
    \end{equation*}
    where
    \begin{equation}
        \psi_c\geq\frac{\log 2}{\sqrt{n}}.
        \label{eq:cube-isoperimetric coefficient}
    \end{equation}
\end{lemma}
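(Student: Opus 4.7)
The result has the flavor of an $\ell_0$-isoperimetric inequality on the uniform cube: axis-disjointness of $S_1,S_2$ says every $x\in S_1$ and $y\in S_2$ differ in at least two coordinates, so $S_3$ must contain the full $\ell_0$-$1$-expansion of $S_1$ lying outside $S_1$. Concretely, letting $N_1(S_1)=\bigcup_{i=1}^n \proj_i^{-1}(\proj_i(S_1))$ denote the union of all axis-parallel lines through $S_1$ (where $\proj_i$ is the projection onto the facet $C_i$ perpendicular to $e_i$), axis-disjointness forces $N_1(S_1)\cap S_2=\emptyset$, so $\vol(S_3)\geq \vol(N_1(S_1))-\vol(S_1)$. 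The task thus reduces to lower bounding this $\ell_0$-expansion by $(\log 2/(4\sqrt{n}))\vol(S_1)$.

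My plan is induction on $n$. The base case $n=2$ is direct: axis-disjointness in a square forces $S_j\subseteq \proj_1(S_j)\times \proj_2(S_j)$ for $j=1,2$ with all four projections pairwise disjoint on each coordinate axis, so an optimization over admissible projection side-lengths subject to $\vol(S_1)\leq(2/3)\delta^2$ yields a base constant at least as large as the claim. For the inductive step, I slice $C$ along a coordinate direction $i$ into $(n-1)$-dimensional cubes. The slices $S_1^t,S_2^t$ inherit axis-disjointness, so on balanced slices (where neither $S_j^t$ exceeds two-thirds of the slice volume) the induction hypothesis gives a pointwise bound that integrates via Fubini. On unbalanced slices, disjointness of $\proj_i(S_1)$ and $\proj_i(S_2)$ in the facet $C_i$ forces the global measure of the smaller set to be small, which controls the bad contribution to $\vol(S_3)$.

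The $1/\sqrt{n}$ scaling in the isoperimetric constant would emerge from averaging over the $n$ available slicing directions and selecting the one giving the best recursion; intuitively, at least one direction must have projections whose disjointness gives an inequality scaling as $1/\sqrt{n}$ rather than $1/n$. The constant $\log 2$ is fixed by the base case and should be preserved through the multiplicative structure of the recursion.

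I expect the main technical obstacle to be obtaining exactly the $1/\sqrt{n}$ rate rather than a weaker one, since naive union bounds on $\vol(N_1(S_1))$ or crude Fubini arguments tend to lose a full factor of $\sqrt{n}$ or $n$. Achieving the stated scaling will require carefully matching the balanced and unbalanced contributions in the inductive step so that the recursive product across all $n$ levels preserves both the $(\log 2)/\sqrt{n}$ rate and the $1/4$ prefactor.
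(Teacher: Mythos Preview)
The paper does not prove this lemma at all: it is quoted as a result of Laddha and Vempala, with the sharper constant $\psi_c\ge (\log 2)/\sqrt{n}$ attributed to Fernandez. So there is no ``paper's own proof'' to compare against; the statement functions as a black box in the paper.

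As for your proposal itself, the opening reduction is correct: axis-disjointness gives $N_1(S_1)\cap S_2=\varnothing$, hence $\vol(S_3)\ge \vol(N_1(S_1))-\vol(S_1)$. The gap is exactly where you flag it. A slice-and-induct argument of the kind you describe is essentially what Laddha and Vempala do, and it yields $\psi_c\gtrsim 1/n$, not $1/\sqrt{n}$. Your heuristic that ``averaging over the $n$ slicing directions and picking the best one'' upgrades $1/n$ to $1/\sqrt{n}$ is not substantiated: picking the best of $n$ directions does not by itself buy a $\sqrt{n}$ factor, and the balanced/unbalanced dichotomy you sketch tends to reproduce the $1/n$ recursion rather than improve it. The improvement to $1/\sqrt{n}$ is a separate result (Fernandez), shown there to be sharp, and it does not follow from a routine refinement of the slicing induction. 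So as written your plan would at best recover the weaker $\psi_c\gtrsim 1/n$; the step from there to $1/\sqrt{n}$ is the whole content of the improvement and remains unaddressed.
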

Laddha and Vempala originally
gave a lower bound of $n^{-1}\log 2$ for $\psi_c$. 
This was later improved to $n^{-1/2}\log 2$ by Fernandez 
\cite{fernandez2024ell0}, who also showed that the inverse square
root scaling of $n$ is optimal.

We prove a similar result for $\pi$ that relies on Lemma \ref{lemma:cube-isoperimetry-uniform}.

\begin{lemma}
\label{lemma:cube-isoperimetry-log-concave}
    Consider an axis-aligned cube $C$ of side length $\delta$
    and a log-smooth strongly log-concave measure $\Pi$ supported on $\mathbb{R}^n$ with associated density $\pi$.
    Let $S_1$ and 
    $S_2$ be two axis-disjoint subsets of $C$ such that 
    $$\Pi(S_1)\leq \frac{2}{3}e^{-\nu}\Pi(C),$$
    where $\nu$ is as defined in \eqref{eq:defn-of-nu}.
    Let $S_3=C{{\setminus}} \{S_1\cup S_2\}$. Then, 
    \begin{equation*}
        \Pi(S_3)\geq \frac{\psi_c}{4}e^{-\nu}\Pi(S_1).
    \end{equation*}
\end{lemma}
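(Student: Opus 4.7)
The plan is to use Fact \ref{fact:vol-pi-approximations-on-the-cube} as a translation device between Lebesgue volume on $C$ and the measure $\Pi$ restricted to $C$, and then to invoke the uniform cube isoperimetric inequality (Lemma \ref{lemma:cube-isoperimetry-uniform}) as a black box. The underlying intuition is that $\pi$ is almost constant on $C$ because $\delta$ has been chosen small enough to force $|f(y) - f(x)|\leq \nu$ throughout $C$, so a volumetric isoperimetric statement should transfer to a $\Pi$-isoperimetric statement up to a multiplicative factor that depends only on $\nu$.

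First I would verify that the hypothesis of Lemma \ref{lemma:cube-isoperimetry-uniform} is satisfied in volume, i.e., $\vol(S_1)\leq (2/3)\vol(C)$. Writing $w=\argmin_{x\in C} f(x)$, Fact \ref{fact:vol-pi-approximations-on-the-cube} gives the two-sided comparison $e^{-\nu}\pi(w)\vol(S)\leq \Pi(S)\leq \pi(w)\vol(S)$ for any measurable $S\subseteq C$. Applying the lower bound to $S_1$ and the upper bound to $C$, the hypothesis $\Pi(S_1)\leq (2/3)e^{-\nu}\Pi(C)$ yields
\begin{equation*}
e^{-\nu}\pi(w)\vol(S_1)\leq \Pi(S_1)\leq \frac{2}{3}e^{-\nu}\Pi(C)\leq \frac{2}{3}e^{-\nu}\pi(w)\vol(C),
\end{equation*}
so $\vol(S_1)\leq (2/3)\vol(C)$ as needed.

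Next I would apply Lemma \ref{lemma:cube-isoperimetry-uniform} to obtain $\vol(S_3)\geq (\psi_c/4)\vol(S_1)$, and then translate this volumetric inequality back into a statement about $\Pi$. Using the lower bound of Fact \ref{fact:vol-pi-approximations-on-the-cube} on $S_3$ and the upper bound on $S_1$, I get
\begin{equation*}
\Pi(S_3)\geq e^{-\nu}\pi(w)\vol(S_3)\geq \frac{\psi_c}{4}e^{-\nu}\pi(w)\vol(S_1)\geq \frac{\psi_c}{4}e^{-\nu}\Pi(S_1),
\end{equation*}
which is exactly the claimed inequality. Note that only one factor of $e^{-\nu}$ survives because the two-sided sandwich is used once on each side.

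I do not expect any real obstacle here: the whole argument is a mechanical transfer of Lemma \ref{lemma:cube-isoperimetry-uniform} through the sandwich bound of Fact \ref{fact:vol-pi-approximations-on-the-cube}. The only subtle point is being careful that the $e^{-\nu}$ factor appears on the correct side of the volume hypothesis so that, after transferring back, the final inequality degrades by exactly one factor of $e^{-\nu}$ rather than two, which is why the hypothesis is stated with $e^{-\nu}$ built in on the right-hand side.
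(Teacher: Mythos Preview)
Your proposal is correct and essentially identical to the paper's own proof: both define $w=\argmin_{x\in C}f(x)$, use the sandwich of Fact \ref{fact:vol-pi-approximations-on-the-cube} to reduce the $\Pi$-hypothesis to the volume hypothesis of Lemma \ref{lemma:cube-isoperimetry-uniform}, apply that lemma, and then use the sandwich once more in the reverse direction to recover the $\Pi$-conclusion with a single $e^{-\nu}$ loss.
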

\begin{proof}
    Recall the quantity $w=\argmin_{x\in C}f(x)$.
    From Fact \ref{fact:vol-pi-approximations-on-the-cube} we 
    have the following two inequalities:
    \begin{align}
        &\pi(w)e^{-\nu}\vol(S_1)\leq\Pi(S_1),
        \label{eq:cube-lemma-inter1}\\
        &\pi(w)e^{-\nu}\vol(S_3)\leq\Pi(S_3).
        \label{eq:cube-lemma-inter4}
    \end{align}

    We check the condition in Lemma \ref{lemma:cube-isoperimetry-uniform}:
    \begin{align*}
        &\pi(w)e^{-\nu}\vol(S_1)
        \overset{(i)}{\leq}\Pi(S_1)
        \overset{(ii)}{\leq}\frac{2}{3}e^{-\nu}\Pi(C)
        \overset{(iii)}{\leq} \frac{2}{3}e^{-\nu}\pi(w)\vol(C)\\
        &\Rightarrow \vol(S_1)\leq\frac{2}{3}\vol(C).
    \end{align*}
    In $(i)$, we used \eqref{eq:cube-lemma-inter1}. $(ii)$ is by assumption. In $(iii)$ we used the definition of $w$.
    Thus the condition of Lemma \ref{lemma:cube-isoperimetry-uniform} is satisfied and we are guaranteed
    \begin{equation}
        \vol(S_3)\geq\frac{\psi_c}{4}\vol(S_1).
        \label{eq:cube-lemma-inter3}
    \end{equation}
    To translate this into the statement of the lemma, we apply 
    \eqref{eq:cube-lemma-inter3} to \eqref{eq:cube-lemma-inter4} and then use the definition of $w$. We have
    \begin{align*}
        &\frac{\Pi(S_3)}{\pi(w)}e^{\nu}\geq\vol(S_3)\geq\frac{\psi_c}{4}\vol(S_1)\geq \frac{\psi_c}{4}\frac{\Pi(S_1)}{\pi(w)}\\
        &\Rightarrow \Pi(S_3)\geq \frac{\psi_c}{4}e^{-\nu}\Pi(S_1).
    \end{align*}
\end{proof}

\subsubsection{Proof of Lemma \ref{thm-l0-isoperimetry}}

We begin by stating an isoperimetric theorem for strongly log-concave distributions that will be needed in the course of the argument.

\begin{theorem}
    Consider a convex body $A\subset\mathbb{R}^n$ and a subset $I\subseteq A$. 
    Let $\partial_A I$ be the internal boundary of $I$ in $A$.
    Let $\Pi$ be a strongly log-concave probability measure supported on $\mathbb{R}^n$
    and let $||\Sigma||_{op}$ be the operator norm (the largest eigenvalue) of its covariance 
    matrix $\Sigma$. Let $\Pi_{n-1}(\partial_K I)$ represent the measure of the boundary set
    $\partial_K I$ induced by $\Pi$.
    Then there exists a universal constant $c^{\prime}$ such that
    \begin{equation*}
        \Pi_{n-1}(\partial_A I)\geq\psi_{\pi}\min\left\{\Pi(I),\Pi(A{\setminus}I)\right\}
    \end{equation*}
    where
    \begin{equation}
    \psi_{\pi}\geq \frac{1}{c^{\prime} \left(n\,||\Sigma||^2_{op}\right)^{1/4}}.
    \label{eq:lee-vempala-logconcave-isoperimetry}
    \end{equation}
    \label{thm:log-concave-isoperimetry}
\end{theorem}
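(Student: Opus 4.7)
The plan is to recognize that this statement is precisely the log-concave isoperimetric inequality of Lee and Vempala, and to follow the standard localization-plus-stochastic-analysis route. First I would reduce to the case $A=\mathbb{R}^n$: restricting $\Pi$ to a convex body $A$ preserves log-concavity, and since conditioning on a convex set cannot increase the operator norm of the covariance by more than a constant factor, it suffices to prove the Cheeger-type inequality $\Pi_{n-1}(\partial I)\geq \psi_\pi\min\{\Pi(I),\Pi(\mathbb{R}^n\setminus I)\}$ globally, with $\psi_\pi$ as in \eqref{eq:lee-vempala-logconcave-isoperimetry}.

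Second, I would invoke the localization lemma of Kannan, Lov\'asz, and Simonovits to reduce the $n$-dimensional inequality to a family of one-dimensional inequalities on log-concave "needles'' --- measures of the form $e^{-g(t)}\,dt$ on line segments, with $g$ convex. The localization lemma is tailor-made for Cheeger-type inequalities because they can be rewritten as a bound between three linear functionals of $\Pi$, so verifying the inequality on every needle is sufficient. For a one-dimensional log-concave density with standard deviation $\sigma$, it is classical (Bobkov, Kannan-Lov\'asz-Simonovits) that the Cheeger constant is bounded below by a universal constant times $1/\sigma$. Thus the whole problem reduces to controlling the variances of the needles arising in the localization.

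Third, to upgrade the crude $1/\sqrt{n\,\Tr(\Sigma)/n}$-type bound one would get from a naive needle variance estimate to the sharper $(n\,\|\Sigma\|_{op}^2)^{-1/4}$ rate, I would use Eldan's stochastic localization: build a martingale $\mu_t$ of log-concave tilts of $\Pi$ by adding a time-dependent linear term with Brownian driver, and track the It\^o evolution of its covariance $\Sigma_t$. The Lee--Vempala analysis shows that $\Sigma_t$ grows in a controlled way up to a stopping time of order $\|\Sigma\|_{op}^{-1}n^{-1/2}$, during which the localized measures have bounded thin-shell constant; integrating against the time-evolution of the Cheeger functional then yields the stated exponent $1/4$.

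The hard part by far is the third step. The localization reduction and the one-dimensional needle bound are essentially textbook, and the covariance-truncation reduction of the first step is routine. What makes Theorem \ref{thm:log-concave-isoperimetry} deep is extracting the sharp $(n\|\Sigma\|_{op}^2)^{1/4}$ scaling, which requires the full stochastic-localization machinery together with delicate martingale estimates on the evolution of higher moments; this is the content of the Lee--Vempala breakthrough and I would rely on it as a black box rather than attempt to reprove it from scratch.
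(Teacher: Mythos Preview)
The paper does not prove this theorem at all; it simply cites it as a known result due to Lee and Vempala \cite{lee-vempala-2024} and uses it as a black box in the proof of Lemma \ref{thm-l0-isoperimetry}. Your proposal correctly identifies the provenance of the result and sketches the Lee--Vempala stochastic-localization argument, so in that sense there is no discrepancy to flag---you are simply providing more detail than the paper does, and your concluding remark that you would ``rely on it as a black box rather than attempt to reprove it from scratch'' is exactly what the paper does.
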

This theorem is due to Lee and Vempala \cite{lee-vempala-2024}.
For a $\mu$-strongly log-concave distribution, $||\Sigma||_{op}\leq \frac{1}{\mu}$.

We are now ready to prove Lemma \ref{thm-l0-isoperimetry}.

\begin{proof}[Proof of Lemma \ref{thm-l0-isoperimetry}]

Consider a Euclidean ball $K$ of radius $R$ in $\mathbb{R}^n$
embedded in a
grid of axis-aligned cubes of side 
\begin{equation*}
    \delta=\frac{1}{64\sqrt{\kappa L} n\log n \max\left\{1,\sqrt{\frac{1}{n}\log\frac{1}{\varepsilon}}\right\}}
\end{equation*}
for some $0<\varepsilon<1/2$.
$K$ is centered at the mode $x^{\star}$ of $\pi$.

$S_1$ and $S_2$ are two axis-disjoint subsets of $K$, and let
$S_3=K{\setminus}\{S_1\cup S_2\}$. Without loss of generality, let
$\Pi(S_1)\leq \Pi(S_2)$.

Let $K^{\prime}=(1-\alpha)K$ be the $\alpha$-shrinkage of $K$,
which we must consider in order to avoid overestimating the lower bound
on $\Pi(S_3)$.
We require $\alpha$ to be such that it can fit at least two cube 
diagonals, i.e., we need $\left(\sqrt{n}\delta\right)^{-1}\alpha\geq 2$.
This choice of ratio between $\alpha$ and $\delta$ ensures that any
cube intersecting $K^{\prime}$ as well as all its neighbors are contained in $K$.
We take
\begin{equation}
    \alpha=\frac{1}{4\sqrt{\kappa L} \sqrt{n}\log n \max\left\{1,\sqrt{\frac{1}{n}\log\frac{1}{\varepsilon}}\right\}}.
    \label{eq:alpha}
\end{equation}
$\alpha$ must be less than unity. In fact, we will enforce $\alpha\leq 1/2$
and place the following mild constraint on $L$ to ensure it
is satisfied:
\begin{equation}
    L>\max\left\{\frac{1}{n\log^2 n},\mu\right\}.
    \label{eq:constraint-on-L}
\end{equation}
Let $R^{\prime}$ be the radius of $K^{\prime}$. we make the assignment
\begin{equation}
    R^{\prime}=r(\varepsilon)\sqrt{\frac{n}{\mu}}.
    \label{eq:defn-of-Rprime}
\end{equation}
Together with $\alpha\leq 1/2$, \eqref{eq:defn-of-Rprime} implies 
$R^{\prime}<R\leq 2R^{\prime}$.
Moreover, Lemma \ref{lemma:dwivedi-concentration} guarantees
$\Pi(K^{\prime})\geq 1-\varepsilon$, and therefore
\begin{equation}
    \Pi(K{\setminus}K^{\prime})< \Pi(\mathbb{R}^n{\setminus}K^{\prime})<\varepsilon.
\end{equation}

Let $S_i^{\prime}=S_i \cap K^{\prime}$ for $i=1, 2$.

For any $X\subseteq K$, we have
\begin{equation}
    \Pi(X\cap K^{\prime})
    = \Pi(X) - \Pi(X\cap K{\setminus}K^{\prime})
    \geq \Pi(X) - \Pi(K{\setminus}K^{\prime})
    >\Pi(X)-\varepsilon.
    \label{eq:x-in-kprime-inequality}
\end{equation}

Let $C$ be the set of all cubes that intersect $S_1$. We partition it into
two sets, the boundary set $\bndryset$ and the bulk set $\bulkset$:
\begin{align}
    &\bndryset=\left\{c\in C \,\, s.t. \,\, \Pi(c\cap S_1)\leq \frac{2}{3}e^{-\nu}\Pi(c)\right\},\\
    &\bulkset=\left\{c\in C \,\, s.t. \,\, \Pi(c\cap S_1) > \frac{2}{3}e^{-\nu}\Pi(c)\right\}.
\end{align}
To preserve the bulk/boundary identities of these sets, we must have $\nu<\log\frac{4}{3}$. We have already ensured this (see the discussion surrounding \eqref{eq:defn-of-nu}).

We use curly notation, $\mathcal{C}_i$, to denote the $n$-dimensional body that is 
formed by taking the union of all the cubes in $C_i$ for $i=1, 2$.

Depending on whether most of the measure of 
$S_1$ is contained in the bulk $\bulksetbody$ or the boundary $\bndrysetbody$, there are two cases to consider.

\paragraph{Case 1.} $\Pi(\bndrysetbody\cap S_1)\geq\frac{1}{2}\Pi(S_1)$.

By definition, we have $\Pi(c\cap S_1)\leq \frac{2}{3}e^{-\nu}\Pi(c)$ 
for every $c\in \bndryset$, and so we can apply 
Lemma \ref{lemma:cube-isoperimetry-log-concave}
to each cube in $\bndryset$ and sum the resulting contribution over 
all the cubes in $\bndryset$ to lower bound $\Pi(S_3)$.
However, there being no guarantee that all cubes in $C_1$ are fully 
contained in $K$, we must correct for the possibility of overcounting 
contributions to $S_3$. We do so by considering only the subset of cubes
in $\bndryset$ that intersect $K^{\prime}$,
\begin{equation}
    \bndryset^{\prime}=\{c\in \bndryset \,\, s.t. \,\, c\cap K^{\prime}\neq\varnothing \}.
\end{equation}
Denote by $\mathcal{C}_1^{\prime}$ the solid body that is formed by taking the 
union of all the cubes in $\bndryset^{\prime}$.
Due to our choice of $\alpha$, we are guaranteed that $\mathcal{C}_1^{\prime} \subseteq K$.
By Lemma \ref{lemma:cube-isoperimetry-log-concave}, each cube $c$ in 
$C_1^{\prime}$ makes the following minimal contribution to the measure of $S_3$:
\begin{equation}
    \Pi(c\cap S_3)\geq\frac{\psi_c}{4}e^{-\nu}\Pi(c\cap S_1).
    \label{eq:case1-inter1}
\end{equation}
Summing \eqref{eq:case1-inter1} over all $c\in C_1^{\prime}$, we have
\begin{align*}
    \Pi(S_3)
    \geq \sum_{c\in C_1^{\prime}} \Pi(c\cap S_3)
    \geq \frac{\psi_c}{4}e^{-\nu}\sum_{c\in C_1^{\prime}}\Pi(c\cap S_1)
    =\frac{\psi_c}{4}e^{-\nu}\Pi(\mathcal{C}_1^{\prime}\cap S_1).
\end{align*}
To lower bound $\Pi(\mathcal{C}_1^{\prime}\cap S_1)$, 
note that while $\mathcal{C}_1^{\prime}$ may not be fully contained in $K^{\prime}$,
the set $\bndrysetbody\cap K^{\prime}$ is, and therefore
$\Pi(\mathcal{C}_1^{\prime})\geq\Pi(\mathcal{C}_1\cap K^{\prime})$.
Using this fact, we have
\begin{equation}    
    \Pi(\mathcal{C}_1^{\prime}\cap S_1)
    \geq\Pi(\mathcal{C}_1\cap S_1\cap K^{\prime})
    \geq\Pi(\mathcal{C}_1\cap S_1)-\varepsilon.
\end{equation}
The second inequality follows by taking $X=\mathcal{C}_1\cap S_1$ in 
\eqref{eq:x-in-kprime-inequality}.
Thus,
\begin{align}
    \Pi(S_3)
    \geq \frac{\psi_c}{4}e^{-\nu}
    \big(\Pi(\mathcal{C}_1\cap S_1)-\varepsilon\big)
    \geq \frac{\psi_c}{4}e^{-\nu}
    \left(\frac{1}{2}\Pi(S_1)-\varepsilon
    \right)
    > \frac{5\psi_c}{24}
    \left(\frac{1}{2}\Pi(S_1)-\varepsilon
    \right),
    \label{eq:case1-lower-bound}
\end{align}
where the second inequality follows from the condition $\Pi(\bndrysetbody\cap S_1)\geq\frac{1}{2}\Pi(S_1)$
and the third from $\nu<\log 6/5$.

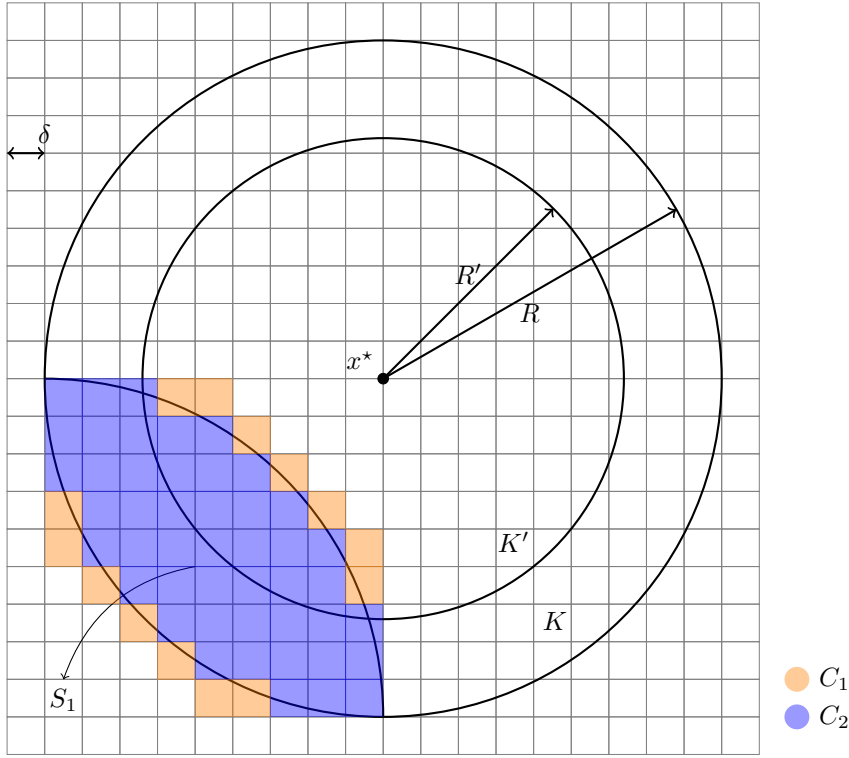
\begin{figure}
\centering
\begin{tikzpicture}[scale=1]

\def\squareSize{0.5}  

\def\gridWidth{19}  
\def\gridHeight{19} 

\foreach \x in {0, 1, 2, ..., \gridWidth} {
  \foreach \y in {0, 1, 2, ..., \gridHeight} {
    \node[anchor=center, opacity=0] at (\x*\squareSize+\squareSize/2, \y*\squareSize+\squareSize/2) {(\x,\y)}; 
    \draw[gray, very thin] (\x*\squareSize, \y*\squareSize) rectangle (\x*\squareSize+\squareSize, \y*\squareSize+\squareSize); 
  }
}

\coordinate (cen) at (5,5);
\draw[thick] (cen) circle (4.5);
\draw[thick] (cen) circle (3.2);

\begin{scope}[shift = {(cen)}]
    \coordinate (R) at (30:4.5);
    \coordinate (Rprime) at (45:3.2);
    \coordinate (K) at (300:4);
    \coordinate (Kprime) at (300:2.8);
\end{scope}
\draw[->, thick] (cen) -- (R) node [below,midway] {$R$};
\draw[->, thick] (cen) -- (Rprime) node [above,midway] {$R^{\prime}$};
\filldraw[black] (cen) circle (2pt) node [above left] {$x^{\star}$};

    \begin{scope}[shift={(0.5,0.5)}]
    \coordinate (fov) at (0,0) ;
    \coordinate (A) at (0:4.5) ;
    \coordinate (B) at (90:4.5) ;
    
    \coordinate (C) at (10, 0);
    \coordinate (Clabel) at (10.5, 0);
    \coordinate (D) at (10, 0.5);
    \coordinate (Dlabel) at (10.5, 0.5);

    \coordinate (S) at (2,2);
    \coordinate (Sarrowhead) at (0.25,0.5);
    \end{scope}

    \draw[->] (S) to[bend right] (Sarrowhead) node [below] {$S_1$};
    \node at (K) [above right] {$K$};
    \node at (Kprime) [above right] {$K^{\prime}$};
    \draw[<->, thick] (0,8) -- (0.5, 8) node [above] {$\delta$};

    \node [circle, fill=blue, opacity = 0.4] at (C) {};
    \node at (Clabel) {$C_2$};
    \node [circle, fill=orange, opacity = 0.4] at (D) {};
    \node at (Dlabel) {$C_1$};
    \draw[thick] (B) arc(90:0:4.5) ; 


\fill[blue, opacity=0.4] (1*\squareSize, 9*\squareSize) rectangle (2*\squareSize, 10*\squareSize);
\fill[blue, opacity=0.4] (1*\squareSize, 8*\squareSize) rectangle (2*\squareSize, 9*\squareSize);
\fill[blue, opacity=0.4] (1*\squareSize, 7*\squareSize) rectangle (2*\squareSize, 8*\squareSize);
\fill[orange, opacity=0.4] (1*\squareSize, 6*\squareSize) rectangle (2*\squareSize, 7*\squareSize);
\fill[orange, opacity=0.4] (1*\squareSize, 5*\squareSize) rectangle (2*\squareSize, 6*\squareSize);

\fill[blue, opacity=0.4] (2*\squareSize, 9*\squareSize) rectangle (3*\squareSize, 10*\squareSize);
\fill[blue, opacity=0.4] (2*\squareSize, 8*\squareSize) rectangle (3*\squareSize, 9*\squareSize);
\fill[blue, opacity=0.4] (2*\squareSize, 7*\squareSize) rectangle (3*\squareSize, 8*\squareSize);
\fill[blue, opacity=0.4] (2*\squareSize, 6*\squareSize) rectangle (3*\squareSize, 7*\squareSize);
\fill[blue, opacity=0.4] (2*\squareSize, 5*\squareSize) rectangle (3*\squareSize, 6*\squareSize);
\fill[orange, opacity=0.4] (2*\squareSize, 4*\squareSize) rectangle (3*\squareSize, 5*\squareSize);

\fill[blue, opacity=0.4] (3*\squareSize, 9*\squareSize) rectangle (4*\squareSize, 10*\squareSize);
\fill[blue, opacity=0.4] (3*\squareSize, 8*\squareSize) rectangle (4*\squareSize, 9*\squareSize);
\fill[blue, opacity=0.4] (3*\squareSize, 7*\squareSize) rectangle (4*\squareSize, 8*\squareSize);
\fill[blue, opacity=0.4] (3*\squareSize, 6*\squareSize) rectangle (4*\squareSize, 7*\squareSize);
\fill[blue, opacity=0.4] (3*\squareSize, 5*\squareSize) rectangle (4*\squareSize, 6*\squareSize);
\fill[blue, opacity=0.4] (3*\squareSize, 4*\squareSize) rectangle (4*\squareSize, 5*\squareSize);
\fill[orange, opacity=0.4] (3*\squareSize, 3*\squareSize) rectangle (4*\squareSize, 4*\squareSize);

\fill[orange, opacity=0.4] (4*\squareSize, 9*\squareSize) rectangle (5*\squareSize, 10*\squareSize);
\fill[blue, opacity=0.4] (4*\squareSize, 8*\squareSize) rectangle (5*\squareSize, 9*\squareSize);
\fill[blue, opacity=0.4] (4*\squareSize, 7*\squareSize) rectangle (5*\squareSize, 8*\squareSize);
\fill[blue, opacity=0.4] (4*\squareSize, 6*\squareSize) rectangle (5*\squareSize, 7*\squareSize);
\fill[blue, opacity=0.4] (4*\squareSize, 5*\squareSize) rectangle (5*\squareSize, 6*\squareSize);
\fill[blue, opacity=0.4] (4*\squareSize, 4*\squareSize) rectangle (5*\squareSize, 5*\squareSize);
\fill[blue, opacity=0.4] (4*\squareSize, 3*\squareSize) rectangle (5*\squareSize, 4*\squareSize);
\fill[orange, opacity=0.4] (4*\squareSize, 2*\squareSize) rectangle (5*\squareSize, 3*\squareSize);

\fill[orange, opacity=0.4] (5*\squareSize, 9*\squareSize) rectangle (6*\squareSize, 10*\squareSize);
\fill[blue, opacity=0.4] (5*\squareSize, 8*\squareSize) rectangle (6*\squareSize, 9*\squareSize);
\fill[blue, opacity=0.4] (5*\squareSize, 7*\squareSize) rectangle (6*\squareSize, 8*\squareSize);
\fill[blue, opacity=0.4] (5*\squareSize, 6*\squareSize) rectangle (6*\squareSize, 7*\squareSize);
\fill[blue, opacity=0.4] (5*\squareSize, 5*\squareSize) rectangle (6*\squareSize, 6*\squareSize);
\fill[blue, opacity=0.4] (5*\squareSize, 4*\squareSize) rectangle (6*\squareSize, 5*\squareSize);
\fill[blue, opacity=0.4] (5*\squareSize, 3*\squareSize) rectangle (6*\squareSize, 4*\squareSize);
\fill[blue, opacity=0.4] (5*\squareSize, 2*\squareSize) rectangle (6*\squareSize, 3*\squareSize);
\fill[orange, opacity=0.4] (5*\squareSize, 1*\squareSize) rectangle (6*\squareSize, 2*\squareSize);

\fill[orange, opacity=0.4] (6*\squareSize, 8*\squareSize) rectangle (7*\squareSize, 9*\squareSize);
\fill[blue, opacity=0.4] (6*\squareSize, 7*\squareSize) rectangle (7*\squareSize, 8*\squareSize);
\fill[blue, opacity=0.4] (6*\squareSize, 6*\squareSize) rectangle (7*\squareSize, 7*\squareSize);
\fill[blue, opacity=0.4] (6*\squareSize, 5*\squareSize) rectangle (7*\squareSize, 6*\squareSize);
\fill[blue, opacity=0.4] (6*\squareSize, 4*\squareSize) rectangle (7*\squareSize, 5*\squareSize);
\fill[blue, opacity=0.4] (6*\squareSize, 3*\squareSize) rectangle (7*\squareSize, 4*\squareSize);
\fill[blue, opacity=0.4] (6*\squareSize, 2*\squareSize) rectangle (7*\squareSize, 3*\squareSize);
\fill[orange, opacity=0.4] (6*\squareSize, 1*\squareSize) rectangle (7*\squareSize, 2*\squareSize);

\fill[orange, opacity=0.4] (7*\squareSize, 7*\squareSize) rectangle (8*\squareSize, 8*\squareSize);
\fill[blue, opacity=0.4] (7*\squareSize, 6*\squareSize) rectangle (8*\squareSize, 7*\squareSize);
\fill[blue, opacity=0.4] (7*\squareSize, 5*\squareSize) rectangle (8*\squareSize, 6*\squareSize);
\fill[blue, opacity=0.4] (7*\squareSize, 4*\squareSize) rectangle (8*\squareSize, 5*\squareSize);
\fill[blue, opacity=0.4] (7*\squareSize, 3*\squareSize) rectangle (8*\squareSize, 4*\squareSize);
\fill[blue, opacity=0.4] (7*\squareSize, 2*\squareSize) rectangle (8*\squareSize, 3*\squareSize);
\fill[blue, opacity=0.4] (7*\squareSize, 1*\squareSize) rectangle (8*\squareSize, 2*\squareSize);

\fill[orange, opacity=0.4] (8*\squareSize, 6*\squareSize) rectangle (9*\squareSize, 7*\squareSize);
\fill[blue, opacity=0.4] (8*\squareSize, 5*\squareSize) rectangle (9*\squareSize, 6*\squareSize);
\fill[blue, opacity=0.4] (8*\squareSize, 4*\squareSize) rectangle (9*\squareSize, 5*\squareSize);
\fill[blue, opacity=0.4] (8*\squareSize, 3*\squareSize) rectangle (9*\squareSize, 4*\squareSize);
\fill[blue, opacity=0.4] (8*\squareSize, 2*\squareSize) rectangle (9*\squareSize, 3*\squareSize);
\fill[blue, opacity=0.4] (8*\squareSize, 1*\squareSize) rectangle (9*\squareSize, 2*\squareSize);

\fill[orange, opacity=0.4] (9*\squareSize, 5*\squareSize) rectangle (10*\squareSize, 6*\squareSize);
\fill[orange, opacity=0.4] (9*\squareSize, 4*\squareSize) rectangle (10*\squareSize, 5*\squareSize);
\fill[blue, opacity=0.4] (9*\squareSize, 3*\squareSize) rectangle (10*\squareSize, 4*\squareSize);
\fill[blue, opacity=0.4] (9*\squareSize, 2*\squareSize) rectangle (10*\squareSize, 3*\squareSize);
\fill[blue, opacity=0.4] (9*\squareSize, 1*\squareSize) rectangle (10*\squareSize, 2*\squareSize);

\end{tikzpicture}
\caption{The Euclidean ball $K$ is embedded in a grid of cubes of side $\delta$.
Shaded in purple is the set of cubes in the bulk set $C_2$ covering $S_1$.
In orange is the boundary set $C_1$.}
\label{fig:grid-construction}
\end{figure}

\paragraph{Case 2.} $\Pi(\bndrysetbody\cap S_1)<\frac{1}{2}\Pi(S_1)$ 
$\Rightarrow$
$\Pi(\bulksetbody\cap S_1)\geq\frac{1}{2}\Pi(S_1)$.

Let $\partial_K\bulksetbody$ denote the internal boundary of 
$\bulksetbody$ in $K$, i.e., the intersection of $K$ and the boundary of $\bulksetbody$. Let $\mathcal{B}$ be the set of facets 
(of cubes) that intersect $\partial_K\bulksetbody$.
Consider a facet $\facet$ in $\mathcal{B}$ 
and the two cubes on either side of $\facet$. Since $\facet$ is on the
boundary of $\bulksetbody$, one of these cubes, call it $v$, is in 
$\bulkset$, and the other, $u$, is not. See Figure \ref{fig:extensions-close-up} for a picture of this construction.

In what follows, we will lower bound $\Pi(S_3\cap u)$, 
thus quantifying the minimal measure of the region to which the 
sampler can escape $S_1$ through
$\facet$.
The bound will be in terms of $\Pi(u)$.
We will rewrite it in terms of $\Pi_{n-1}(\facet)$,
gather contributions from all the facets in $\mathcal{B}$, and
then apply Theorem \ref{thm:log-concave-isoperimetry} 
to obtain a lower bound on $\Pi(S_3)$ in terms of $\Pi(S_1)$.

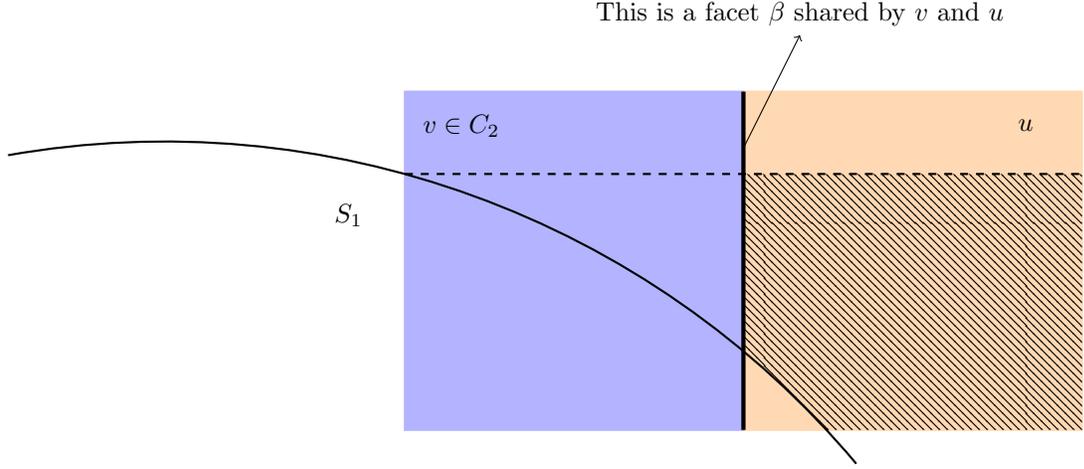
\begin{figure}
\centering
\begin{tikzpicture}[scale=1.5]
    
\draw[thick, fill, color=blue, opacity=0.3] (0, 0) rectangle (3, 3);

\draw[thick, fill, color=orange, opacity=0.3] (3, 0) rectangle (6, 3);

\draw[draw=none, thick, name path=vert] (0,0) -- (0,3);
\draw[ultra thick, name path=beta] (3,0) -- (3,3);

\draw[thick, name path=Sarc] (4, -0.3) arc[start angle=40, end angle=100, radius=8];

\path [name intersections={of=Sarc and vert,by=I}];
\draw[thick, dashed] (I) -- (6, |-I);

\path [name intersections={of=Sarc and beta,by=fv}];
\draw[draw=none, name path = bottomline] (3,0) -- (6,0);
\path [name intersections={of=Sarc and bottomline,by=fiv}];

\begin{scope}[shift = {(0, 0)}]
    \coordinate (slabel) at (-0.5, 1.9);
    \coordinate (facetlabelorigin) at (3, 2.5); 
    \coordinate (facetlabelend) at (3.5, 3.5); 
    \coordinate (lc) at (0.5, 2.7);
    \coordinate (rc) at (5.5, 2.7);
    \coordinate (lctext) at (0.8, -0.2);
    \coordinate (rctext) at (5.7, -0.2);
    \coordinate (rctextii) at (5.7, -0.45);
    \coordinate (rctextiii) at (5.7, -0.7);
    \coordinate (rctextiv) at (5.7, -0.95);
    \coordinate (rctextv) at (5.7, -1.2);
    \coordinate (rctextvi) at (5.7, -1.45);

    \coordinate (fi) at (3, |-I);
    \coordinate (fii) at (6, |-I);
    \coordinate (fiii) at (6,0);
\end{scope}

\fill[pattern=north west lines] (fi) -- (fii) -- (fiii) -- (fiv) -- (fv) -- cycle;

\draw[->] (facetlabelorigin) -- (facetlabelend) node [above] {This is a facet $\beta$ shared by $v$ and $u$};
\node at (lc) {$v\in C_2$};
\node at (rc) {$u$};
\node at (slabel) {$S_1$};

\end{tikzpicture}
\caption{The part of $\facet$ underneath the dashed line is $\mathcal{P}_{\facet}(S_1\cap v)$, the projection of $S_1\cap v$ onto $\beta$. The part of $u$ underneath the dashed line is $\curlyext$, the extension of $\mathcal{P}_{\facet}(S_1\cap v)$ in $u$ along the coordinate direction normal to $\beta$.
The subset of this region shaded with lines is the precisely the part of $S_3\cap u$ that the dynamics can escape to through $\beta$ from $S_1\cap v$.
A worst-case lower bound on its measure is derived in \eqref{eq:inter-bound-on-pi-S3-in-u}.}
\label{fig:extensions-close-up}
\end{figure}

Let $e_i$ be the coordinate vector perpendicular to $\beta$.
Let $\mathcal{P}_{\facet}(S_1\cap v)$ 
be the projection of $S_1\cap v$ onto $\facet$, and let 
$\mathcal{E}_u(\mathcal{P}_{\facet}(S_1\cap v))$
be the extension of this projection along $e_i$ in $u$. 
\begin{align}
    &\mathcal{P}_{\facet}(S_1\cap v)
    = \{x\in \facet \,\, s.t. \,\, \exists y\in v \,\, s.t. \,\, y_j=x_j \, \forall j\in[n]{\setminus} i\},\\
    &\mathcal{E}_u(\mathcal{P}_{\facet}(S_1\cap v))
    = \{x\in u \,\, s.t. \,\, \exists y\in \mathcal{P}_{\facet}(S_1\cap v) \,\, s.t. \,\, x_j=y_j \, \forall j\in[n]{\setminus} i\}.
\end{align}
$\mathcal{E}_u(\mathcal{P}_{\facet}(S_1\cap v))$
is precisely the region of $u$ that the dynamics can move to from 
$v$ in the direction $e_i$ in one step.
For notational simplicity, we will drop the argument 
of $\mathcal{P}_{\facet}$ in the sequel.

$S_1$ and $S_2$ are axis disjoint. Thus we have 
\begin{equation*}
    \curlyext\cap S_2=\varnothing.
\end{equation*}
Consequently,
\begin{gather}
    \Pi(\curlyext)
    =\Pi(\curlyext\cap S_1) + \Pi(\curlyext\cap S_3)
    \overset{(i)}{\leq} \Pi(u\cap S_1) + \Pi(u\cap S_3),\nonumber\\
    \Rightarrow\Pi(u\cap S_3)\geq\Pi(\curlyext)-\Pi(u\cap S_1).
    \label{eq:decomposition-of-pi-curly-ext-1}
\end{gather}
$(i)$ follows from the fact that $\curlyext\subseteq u$.

Next, we develop a lower bound on $\Pi(\curlyext)$ in terms of $\Pi(u)$.
By construction,
\begin{equation*}
    \vol(\curlyext)\geq\vol(S_1\cap v).
\end{equation*}
Let $w_v=\argmin_{x\in v}f(x)$. Then
\begin{equation*}
    \vol(S_1\cap v)\geq \frac{\Pi(S_1\cap v)}{\pi(w_v)}> \frac{2}{3}e^{-\nu}\frac{\Pi(v)}{\pi(w_v)},
\end{equation*}
where the second inequality follows from the fact that $v$ is an element of $\bulkset$.
Now, using Fact \ref{fact:vol-pi-approximations-on-the-cube},
\begin{equation*}
    \Pi(v)\geq e^{-\nu}\pi(w_v)\vol(v)=e^{-\nu}\pi(w_v)\vol(u).
\end{equation*}
Thus, 
\begin{equation*}
    \vol(S_1\cap v)\geq
    \frac{2}{3}e^{-2\nu}\vol(u),
\end{equation*}
i.e.,
\begin{equation}
    \vol(\curlyext)\geq \frac{2}{3}e^{-2\nu}\vol(u).
    \label{eq:bound-on-pi-ext-inter1}
\end{equation}
Let $w_{u}=\argmin_{x\in u}f(x)$. Then 
\begin{equation*}
    \vol(u)\geq \frac{\Pi(u)}{\pi(w_u)}
\end{equation*}
and by Fact \ref{fact:vol-pi-approximations-on-the-cube},
\begin{equation*}
    \frac{\Pi(\curlyext)}{\pi(w_u)}e^{\nu}\geq\vol(\curlyext).
\end{equation*}
Applying the last two bounds to \eqref{eq:bound-on-pi-ext-inter1},
we have the following lower bound on the measure of $\curlyext$:
\begin{equation}
    \Pi(\curlyext)\geq\frac{2}{3}e^{-3\nu}\Pi(u).
    \label{eq:lower-bound-on-pi-ext}
\end{equation}

Substituting \eqref{eq:lower-bound-on-pi-ext} in 
\eqref{eq:decomposition-of-pi-curly-ext-1}, 
we have the following lower bound on $\Pi(S_3\cap u)$ in terms of $\Pi(u)$ and
$\Pi(S_1\cap u)$:
\begin{equation}
    \Pi(u\cap S_3)
    \geq \frac{2}{3}e^{-3\nu}\Pi(u)-\Pi(u\cap S_1).
    \label{eq:decomposition-of-measure-of-curlyext}
\end{equation}

Now, if $u\notin \bndryset$ (i.e., $\Pi(u\cap S_1)=0$), the second term on the 
right-hand side of \eqref{eq:decomposition-of-measure-of-curlyext}
vanishes, and we are left with a bound on $\Pi(S_3\cap u)$ that depends 
only on $\Pi(u)$, as was our goal.
However, if $u\in \bndryset$, 
then there is more work to be done, which we split into 
two cases depending on the proportion of $u$ occupied by $S_1$. 
Let this proportion be represented by a quantity $h$, 
which we will choose later. Note that $h$ cannot exceed $\frac{2}{3}e^{-\nu}$.

\begin{itemize}
    \item 
    If $\Pi(u\cap S_1)\leq h\Pi(u)$, then from \eqref{eq:decomposition-of-measure-of-curlyext}, we have
\begin{equation}
    \Pi(u \cap S_3)
    \geq \Pi(\curlyext) - h\Pi(u)
    \geq \left(\frac{2}{3}e^{-3\nu}-h\right)\Pi(u).
    \label{eq:caseBi-lower-bound}
\end{equation}
    \item If $h\Pi(u)<\Pi(u\cap S_1)\leq\frac{2}{3}e^{-\nu}\Pi(u)$,
we can apply Lemma \ref{lemma:cube-isoperimetry-log-concave} to
$u\cap S_3$ to directly lower bound its measure:
\begin{equation}\label{eq:caseBii-lower-bound}
    \Pi(u\cap S_3)\geq \frac{\psi_c}{4}e^{-\nu}\Pi(S_1\cap u)\geq \frac{\psi_c}{4}e^{-\nu}h\Pi(u).
\end{equation}
\end{itemize}

\paragraph{The minimal bound on $\Pi(u\cap S_3)$.}

The contribution of $u$ to the measure of $S_3$ 
is at least the minimum of the three bounds we have so far written for it (given by \eqref{eq:decomposition-of-measure-of-curlyext} when $\Pi(S_1\cap u)=0$ 
and by \eqref{eq:caseBi-lower-bound} and \eqref{eq:caseBii-lower-bound}
when $\Pi(S_1\cap u)\neq 0$):
\begin{equation}
    \Pi(S_3 \cap u)\geq\min\left\{\frac{2}{3}e^{-3\nu}, \frac{2}{3}e^{-3\nu}-h, \frac{\psi_c}{4}e^{-\nu}h\right\}\Pi(u).
    \label{eq:the-minimum}
\end{equation}
The first term in \eqref{eq:the-minimum} is clearly larger than the other two.
To distinguish between the latter, we must make a choice of $h$. 

Subject to the constraint that the second term in \eqref{eq:the-minimum} be 
positive for all $n$, it is not possible to choose $h$ so that this term is 
smaller than the third term for all $n$.\footnote{For any \emph{fixed}
$n^{\prime}$, if we make the assignment $h=\iota e^{-3\nu}$ where $\iota$ is a constant
strictly less than $2/3$, we can choose $\iota$ to be such that the second term in \eqref{eq:the-minimum} is smaller than the third term for all $n\leq n^{\prime}$.
}
It is therefore sufficient to take $h$ to be a constant strictly smaller than $\frac{2}{3}e^{-3\nu}$. 
Since $e^{-\nu}\geq 5/6$ (see the discussion surrounding \eqref{eq:defn-of-nu}), 
any $h<\frac{2}{3}\left(\frac{5}{6}\right)^3=\frac{125}{324}$ will suffice. We will take $h=1/3$.
Thus we arrive at the bound
\begin{equation}
    \Pi(S_3 \cap u)\geq\frac{\psi_c}{12}e^{-\nu}\Pi(u).
    \label{eq:inter-bound-on-pi-S3-in-u}
\end{equation}

To write this bound in terms of $\Pi_{n-1}(\beta)$, we apply Fact \ref{fact:area-approx} to $\Pi(u)$ with $\omega=\facet$. Then, 
\begin{equation}
    \Pi(S_3 \cap u)
    \geq\frac{\psi_c}{12}\cdot (2e^{-\nu}-1)\cdot\delta\cdot\Pi_{n-1}(\beta)
    \geq \frac{\psi_c}{12}\cdot \frac{2}{3}\cdot \delta\cdot\Pi_{n-1}(\beta).
    \label{eq:lower-bound-per-facet-without-normalizing}
\end{equation}

The cube $u$ has $2n$ facets, 
which means that there are up to $2n$ facets in $\mathcal{B}$ that can contribute 
to $\Pi(S_3\cap u)$.
We will shortly sum \eqref{eq:lower-bound-per-facet-without-normalizing}
over all $\facet\in\mathcal{B}$. To avoid overcounting contributions
to $\Pi(S_3)$ when we do so, we normalize the right-hand side of
\eqref{eq:lower-bound-per-facet-without-normalizing} by a factor of $1/2n$.
Every facet $\facet\in\mathcal{B}$ therefore makes at least the
contribution 
\begin{equation}
    \frac{1}{2n}\cdot\frac{\psi_c}{18}
    \cdot\delta\cdot\Pi_{n-1}(\facet)
    \label{eq:lower-bound-per-facet}
\end{equation}
to $\Pi(S_3)$.

Now we must correct for the possibility that not all the cubes that 
contribute facets to $\mathcal{B}$ are fully contained in $K$.
To do so, we define the following subset of $C_2$:
\begin{equation}
    C_2^{\prime}=\{c\in C_2 \,\, s.t. \,\, c\cap K^{\prime}\neq\varnothing\}.
\end{equation}
By construction, every cube in $C_2^{\prime}$ and its immediate neighbors are fully 
contained in $K$. Define the set $I=K^{\prime}\cap \mathcal{C}_2$.
$I\subseteq \mathcal{C}_2^{\prime}$. Let $\mathcal{B}^{\prime}$ be the set of facets
of cubes that intersect the internal boundary of $I$ in $K^{\prime}$, $\partial_{K^{\prime}}I$.
$\mathcal{B}^{\prime}$ is the set of facets out of which the dynamics can escape
$S_1$ and that are contributed by cubes that are fully contained in $K$.
$\mathcal{B}^{\prime}\subseteq\mathcal{B}$.
Summing \eqref{eq:lower-bound-per-facet} over all facets in $\mathcal{B}^{\prime}$,
we arrive at the bound
\begin{align}
    \Pi(S_3)&\geq \frac{1}{2n}\frac{\psi_c}{18}
    \cdot\delta
    \sum_{\beta\in\mathcal{B}^{\prime}}\Pi_{n-1}(\beta)
    =\frac{1}{2n}\frac{\psi_c}{18}
    \cdot\delta\cdot
    \Pi\left(\partial_{K^{\prime}}I\right).
    \label{eq:lower-bound-on-pi-s3-before-iso}
\end{align}

$\Pi\left(\partial_{K^{\prime}}I\right)$ is lower bounded via Theorem \ref{thm:log-concave-isoperimetry} as follows:
\begin{equation}
    \Pi\left(\partial_{K^{\prime}}I\right)
    \geq\psi_{\pi}
    \min\left\{\Pi(I),\Pi(K^{\prime}{\setminus} I)\right\}.
    \label{eq:isoperimtry-on-I}
\end{equation}
We evaluate the minimum of $\Pi(I)$ and $\Pi(K^{\prime}{\setminus} I)$.
We have
\begin{equation}
    \Pi(I)
    = \Pi\left(K^{\prime}\cap \bulksetbody\right)
    > \Pi(\bulksetbody) - \varepsilon
    > \frac{1}{2}\Pi(S_1) - \varepsilon.
\end{equation}
The first inequality follows from \eqref{eq:x-in-kprime-inequality}, and the second by construction:
$\Pi(S_1)=\Pi(\bulksetbody\cap S_1)+\Pi(\bndrysetbody\cap S_1)\leq 2\Pi(\bulksetbody\cap S_1)\leq 2\Pi(\bulksetbody)$.
In addition, we have the lower bound
\begin{equation}
    \Pi(I)
    \leq \Pi(\bulksetbody^{\prime})
    < \frac{3}{2}e^{\nu}\Pi(\bulksetbody^{\prime}\cap S_1)
    \leq \frac{9}{5}\Pi(\bulksetbody^{\prime}\cap S_1)
    \leq \frac{9}{10}\Pi(K),
    \label{eq:inter-min-isoperimetry}
\end{equation}
where the second inequality follows from the definition of $C_2$, the third from \eqref{eq:defn-of-nu},
and the last from $\Pi(S_1)\leq\Pi(K)/2$.
Using \eqref{eq:inter-min-isoperimetry}, we have the following lower bound for $\Pi(K^{\prime}{\setminus}I)$:
\begin{equation}
    \Pi(K^{\prime}{\setminus}I)
    =\Pi(K^{\prime})-\Pi(I)
    \geq \Pi(K^{\prime})- \frac{9}{10}\Pi(K)
    =\frac{1}{10}\Pi(K) -\Pi(K{\setminus}K^{\prime})
    \geq\frac{1}{5}\Pi(S_1) -\varepsilon.
\end{equation}
And thus
\begin{equation}
    \min\left\{\Pi(I),\Pi(K^{\prime}{\setminus} I)\right\}
    \geq\frac{1}{5}\Pi(S_1)-\varepsilon.
    \label{eq:min-isoperimetric-thm}
\end{equation}

Putting together \eqref{eq:lower-bound-on-pi-s3-before-iso}, \eqref{eq:isoperimtry-on-I}, and \eqref{eq:min-isoperimetric-thm}, and
using the definition of $\delta$, we finally
arrive at the following bound on $\Pi(S_3)$:
\begin{equation}
    \Pi(S_3)\geq
    \frac{1}{2n}\frac{\psi_c}{18}
    \psi_{\pi}
    \frac{1}{8\sqrt{\kappa L} n\log n \max\left\{1,\sqrt{\frac{1}{n}\log\frac{1}{\varepsilon}}\right\}}
    \left(\frac{1}{5}\Pi(S_1)-\varepsilon\right).
    \label{eq:case2-lower-bound}
\end{equation}

This concludes the discussion of Case 2.

The minimum of \eqref{eq:case1-lower-bound} and \eqref{eq:case2-lower-bound} is clearly the latter.
Applying the lower bounds \eqref{eq:cube-isoperimetric coefficient} and \eqref{eq:lee-vempala-logconcave-isoperimetry} on $\psi_c$ and $\psi_{\pi}$, respectively, we finally arrive at the bound
\begin{equation*}
    \Pi(S_3)\geq
    \Psi
    \left(\frac{1}{5}\Pi(S_1)-\varepsilon\right),
\end{equation*}
where, for a universal constant $c$,
\begin{equation*}
    \Psi\geq \frac{\log 2}{2^5\cdot 3^3\cdot c}
    \frac{1}{\kappa n^{2+\frac{3}{4}}\log n \max\left\{1,\sqrt{\frac{1}{n}\log\frac{1}{\varepsilon}}\right\}}.
\end{equation*}

\end{proof}

\section{Discussion}
\label{sec:discussion}

For very large $n$, the exponent of $n$ in \eqref{eq:mixing-time-with-lee-vempala-isoperimetry} can
be improved by applying an alternative lower bound on the log-concave 
isoperimetric coefficient in Theorem \ref{thm:log-concave-isoperimetry}
due to Chen \cite{chen2021-almost-constant-lower-bound}:
\begin{equation}
    \psi_{\pi}\geq 
    \frac{\mu^{\frac{1}{2}}}{
    n^{c^{\prime}\sqrt{\frac{\log\log n}{\log n}}
    }}.
    \label{eq:chen-logconcave-isoperimetry}
\end{equation}
$c^{\prime}$ is a universal constant.
Applying this bound on $\psi_{\pi}$ in $\Psi$
instead of \eqref{eq:lee-vempala-logconcave-isoperimetry}, we have
    \begin{equation}
    \tau(\gamma)<
        C
        \kappa^2 
        n^{7+2c^{\prime}\sqrt{\frac{\log\log n}{\log n}}}
        \log^2 n 
        \left(
        \max\left\{1,\sqrt{\frac{1}{n}\log\frac{2M}{\gamma}}
        \right\}
        \right)^2
        \log\frac{2M}{\gamma}
        \label{eq:mixing-time-with-chen-isoperimetry}
\end{equation}
for universal constants $C$ and $c^{\prime}$.
The function $\sqrt{\frac{\log\log n}{\log n}}$ goes to zero as $n$ goes to infinity. 
When $n$ is large enough that $2c^{\prime}\sqrt{\frac{\log\log n}{\log n}}<0.5$, \eqref{eq:mixing-time-with-chen-isoperimetry} is a tighter bound on the mixing time than \eqref{eq:mixing-time-with-lee-vempala-isoperimetry}.

It is sometimes of interest to consider sampling from an
\emph{isotropic} log-concave distribution, i.e., a distribution for which $\kappa=1$ and $x^{\star}=0$.
Any strongly log-concave distribution can be brought into isotropic
position in $O^{\star}\left(n^5\right)$ steps \cite{vempala-lovasz-2007}.
After this preconditioning step, which need be done only once, samples may be
drawn using a sampling algorithm of choice.
If we were to consider only the subclass of isotropic log-concave distributions 
in the proof of Theorem \ref{thm:main-mixing-time}, 
we could apply concentration results for isotropic log-concave measures
that have slightly better $n$-dependence than does 
Lemma \ref{lemma:dwivedi-concentration}, such as Theorem 16 in \cite{lee-vempala-2024}, when choosing the radius of $K$. 
Doing so enables us to improve the $n$-dependence of $\delta$ to $\left(\sqrt{n}\log n\sqrt{\log\frac{1}{\varepsilon}}\max\left\{n^{1/4},\sqrt{\log\frac{1}{\varepsilon}}\right\}\right)^{-1}$ for $\varepsilon\in(0,1/2)$. This leads to a mixing time bound with a better exponent in $n$ but worse dependence on $M/\gamma$:
\begin{equation}
    \tau(\gamma)<
        C
        n^{6.5}
        \log^2 n 
        \left(
        \max\left\{n^{\frac{1}{4}},\sqrt{\frac{1}{n}\log\frac{2M}{\gamma}}
        \right\}
        \right)^2
        \log^2\frac{2M}{\gamma},
        \label{eq:mixing-time-isotropic}
\end{equation}
where $C$ is a universal constant.

We suspect that neither the bound in Theorem \ref{thm:main-mixing-time} nor \eqref{eq:mixing-time-with-chen-isoperimetry} is tight.
Establishing an isoperimetric inequality with better $n$-dependence than Lemma \ref{thm-l0-isoperimetry} is therefore an 
important problem for the future. 
A different proof technique than the one used to prove Lemma \ref{thm-l0-isoperimetry} will likely be needed.

\section*{Acknowledgements}
NSW is grateful to Yuansi Chen for suggesting the 
possibility of exploiting the approximate local uniformity of smooth functions 
to prove Lemma \ref{thm-l0-isoperimetry}, and to
Andre Wibisono, Santosh Vempala, Aditi Laddha, Samuel Livingstone, and Andrew Wade for useful discussions.
NSW would like to thank the Isaac Newton Institute for Mathematical Sciences for support and hospitality during the program Stochastic Systems for Anomalous Diffusion where a part of this work was completed. This work was supported by: EPSRC grant number EP/R014604/1.
The Flatiron Institute is a division of the Simons Foundation.

\bibliographystyle{unsrt}
\bibliography{bibliography}

\end{document}